\documentclass[11pt]{amsart}
\usepackage{amsfonts,amscd,latexsym,amsmath,amssymb,enumerate,verbatim,amsthm,epsfig,alltt,color,tikz-cd,bm,hyperref}
\usepackage[shortlabels]{enumitem}

\newcommand{\Aut}{\mathrm{Aut}}

\newcommand{\Rea}{\mathbb{R}}
\newcommand{\Nat}{\mathbb{N}}
\newcommand{\Int}{\mathbb{Z}}

\newcommand{\Com}{\mathbb{C}}

\newcommand{\AAA}{\mathcal{A}}
\newcommand{\PP}{\mathcal{P}}
\newcommand{\FF}{\mathcal{F}}
\newcommand{\GG}{\mathcal{G}}

\newcommand{\B}{\mathbb{B}}

\newcommand{\TT}{\mathbb{T}}
\newcommand{\II}{\mathcal{I}}
\newcommand{\JJ}{\mathcal{J}}

\newcommand{\Lip}{\mathrm{Lip}}
\newcommand{\BLip}{\mathrm{BLip}}
\newcommand{\HLip}{\mathrm{HLip}}

\newcommand{\cons}{\mathtt{constants}}

\usepackage{aliascnt}

\theoremstyle{plain}

\newtheorem{theorem}{\bf Theorem}[section]

\newaliascnt{lemma}{theorem}
\newtheorem{lemma}[lemma]{\bf Lemma}
\aliascntresetthe{lemma}

\newaliascnt{proposition}{theorem}
\newtheorem{proposition}[proposition]{\bf Proposition}
\aliascntresetthe{proposition}

\newaliascnt{corollary}{theorem}

\aliascntresetthe{corollary}

\newaliascnt{fact}{theorem}
\newtheorem{fact}[fact]{\bf Fact}
\aliascntresetthe{fact}

\newaliascnt{claim}{theorem}

\aliascntresetthe{claim}

\theoremstyle{definition}

\newaliascnt{definition}{theorem}
\newtheorem{definition}[definition]{\bf Definition}
\aliascntresetthe{definition}

\newaliascnt{example}{theorem}
\newtheorem{example}[example]{\bf Example}
\aliascntresetthe{example}

\newaliascnt{remark}{theorem}
\newtheorem{remark}[remark]{\bf Remark}
\aliascntresetthe{remark}

\newaliascnt{problem}{theorem}

\aliascntresetthe{problem}

\newaliascnt{question}{theorem}

\aliascntresetthe{question}

\usepackage{cleveref}

\begin{document}
\title[Invariant pointwise closed subspaces of Lipschitz spaces]{Invariant pointwise closed subspaces of Lipschitz spaces and their preduals}

\author{Michal Doucha}
\address{Institute of Mathematics, Czech Academy of Sciences, Žitná 25, 115 67 Praha 1, Czechia}
\email{doucha@math.cas.cz}
\urladdr{https://users.math.cas.cz/~doucha/}

\keywords{Lipschitz-free spaces, Lipschitz harmonic functions, pointwise closed subspaces of Lipschitz spaces, linear subshifts}
\subjclass[2020]{46B20, 46B10, 43A45, 37B10}
\thanks{The author was supported by the GA\v{C}R project 25-15366S and by the Czech Academy of Sciences (RVO 67985840).}

\begin{abstract}
Motivated by both the research on Lipschitz-free spaces and on Lipschitz harmonic functions on graphs, we study invariant pointwise closed subspaces of spaces of Lipschitz functions over graphs, with special emphasis on finitely generated groups as graphs. Such spaces form a natural class of $\text{weak}^*$-closed subspaces, that can be fully described in some cases, and hence have canonical quotient preduals of the corresponding Lipschitz-free spaces. 

We show that these spaces are described by finite local constraints; in the group case as Lipschitz solutions of systems of convolution equations. We describe and characterize their preduals via a universal property and show that whenever they contain a non-zero element with a $c_0$-gradient, then they contain $\ell_\infty$, and consequently their preduals contain a complemented copy of $\ell_1$.

A guiding question is whether this class of Lipschitz spaces and their preduals contains an infinite-dimensional reflexive Banach space. In this regard, the main result of the paper is the following dichotomy proved using abstract harmonic analysis. Every translation-invariant pointwise closed subspace of the Lipschitz space over $\Int^d$ is either finite-dimensional or it is non-separable --in particular, the corresponding predual is either finite-dimensional or non-reflexive.
\end{abstract}

\maketitle

\section{Introduction}
The motivation for this work comes from two areas that have had relatively little direct interaction: the theory of Lipschitz-free Banach spaces and their duals, on the one hand, and the analytic theory of harmonic functions on graphs, on the other.

\emph{Lipschitz-free spaces} are free objects in the category of Banach spaces with bounded linear operators over the category of (pointed) metric spaces with Lipschitz maps (preserving distinguished points). As such they can be characterized, up to linear isometry, by the following universal property. Given a pointed metric space $(M,d,0)$ there is a unique, up to linear isometry, Banach space $\FF(M)$ and an isometric embedding $\delta:M\to\FF(M)$, preserving $0$, such that for any Banach space $Z$ and any Lipschitz map $f:M\to Z$, preserving $0$, there is a unique linear operator $\tilde f:\FF(M)\to Z$, with $\|\tilde f\|=\Lip(f)$, such that the following diagram commutes:

\[
\begin{tikzcd}[column sep=large, row sep=large]
M \arrow[r, "\delta"] \arrow[dr, "f"'] & \FF(M) \arrow[d, "\widetilde f"] \\
& Z
\end{tikzcd}
\]
Setting $Z$ to be the scalar field, which is in the theory of Lipschitz-free spaces as well as in this paper, $\Rea$, one gets that the dual $\FF(M)^*$ is canonically identified with $\Lip_0(M)$, the space of real-valued Lipschitz functions vanishing at $0$ with the norm being the optimal Lipschitz constant, to which we refer as the \emph{Lipschitz space} over $M$. We refer the reader to the monograpj \cite{Weaver} for more information.\medskip

By definition, there is a natural connection between the non-linear geometry of the underlying metric space and the linear geometry of the corresponding Lipschitz-free Banach space. A great deal of research in Lipschitz-free spaces has therefore been devoted to finding various interactions with metric geometry as seen e.g. in \cite{AGPP22,SmTa23,Bi25,BaLuRu25,Lu26,CaCuDo19} and references therein for a small sample.
\medskip

A slight disadvantage of Lipschitz-free spaces is that the universal property makes them very large. Indeed, by \cite{CuDoWo16}, as long as $M$ is infinite, there is an isomorphic embedding $\ell_\infty\hookrightarrow \Lip_0(M)$, which is equivalent with $\ell_1$ being embedded as a complemented subspace of $\FF(M)$. In particular, Lipschitz-free spaces are never reflexive unless they are finite-dimensional. A natural direction is to seek `smaller versions' of Lipschitz-free spaces where some weaker form of the universal property is preserved. Naturally, such spaces are quotients of $\FF(M)$, or by duality, preduals of some $\text{weak}^*$-closed subspaces of $\Lip_0(M)$. Since  by \cite{CuDoWo16}, every separable Banach space is a quotient of any infinite-dimensional $\FF(M)$, the choice of a quotient, or dually $\text{weak}^*$-closed subspace, must be properly justified. Some natural $\text{weak}^*$-closed subspaces of Lipschitz spaces have been investigated. For instance if $S\subseteq\Lip_0(M)$ is $\text{weak}^*$-closed and additionally a linear sublattice, then $S$ is itself a Lipschitz space of the form $\Lip_0(N)$ for some $N$, see \cite[Chapter 6]{Weaver} (see also \cite{Mandal} for another recent example of natural $\text{weak}^*$-closed subspaces of Lipschitz spaces).\medskip

Here we propose to study a particular class of $\text{weak}^*$-closed subspaces of Lipschitz spaces which is motivated by the example of harmonic functions on graphs. We introduce it here in the special case when the graph is a finitely generated group $G$, which we now fix. Given any positive probability measure $\mu$ on $G$, i.e. an element of $S^+_{\ell_1(G)}$, the positive part of the sphere of $\ell_1(G)$, a function $f:G\to\Com$ is $\mu$-harmonic if \[f(g)=\sum_{h\in G} \mu(h)f(gh),\quad g\in G.\] We shall reserve the term `harmonic function' without specifying the measure $\mu$, when $\mu$ is finitely supported and uniformly distributed on a finite generating set of $G$. Traditionally, the most studied class of $\mu$-harmonic functions on groups and graphs is the class of \emph{bounded} $\mu$-harmonic functions as there is a direct and beautiful connection with random walks on graphs and groups, for which we refer to the recent monographs \cite{La-book, Ya-book}. Recently, more general classes of harmonic functions have been studied, one of them being the class of \emph{Lipschitz} ($\mu$-)harmonic functions, which has been made popular through Kleiner's new proof of Gromov's theorem on groups of polynomial growth \cite{Kle10} where Lipschitz harmonic functions play a major role.

A simple observation is that as long as the measure $\mu$ on $G$ is finitely supported, the space $\HLip_0^\mu(G)$ of Lipschitz $\mu$-harmonic functions vanishing at $1_G$ is a pointwise closed, in particular $\text{weak}^*$-closed, subspace of $\Lip_0(G)$. Therefore, it is a dual Banach space whose predual is a natural quotient of $\FF(G)$. Interestingly, it follows from \cite{Kle10} that, in the case when $\mu$ is the uniform probability measure on a finite symmetric generating set, this space and its predual can be finite-dimensional even when $G$ is infinite, as this is the case when $G$ has polynomial growth. On the other hand, a recent result from \cite{ABGK24} shows that $\HLip_0(\GG)$ is a non-zero Banach space as long as $\GG$ is a locally finite vertex-transitive graph.

Two desirable properties of harmonic functions on graphs are:
\begin{itemize}
    \item They are invariant under graph automorphisms. That is, if $\GG$ is a locally finite connected graph, $f:\GG\to\Rea$ is a harmonic function, and $\phi:\GG\to\GG$ is a graph automorphism, then $f\circ \phi$ is harmonic as well.
    \item They are closed under pointwise convergence.
\end{itemize}

It is the goal of this paper to abstract these two properties and lay foundations for the study of more general pointwise closed subspaces of $\Lip_0(\GG)$, where $\GG$ is a locally finite connected graph, that are invariant under (certain subgroups of) graph automorphisms. Since pointwise closedness is stronger than $\text{weak}^*$-closedness, the pointwise closed spaces considered below automatically possess canonical preduals, while retaining considerably more finite-dimensional/local structure. As we shall see, this makes such spaces closely resemble spaces of Lipschitz harmonic functions and connects the study of these spaces with methods coming from symbolic and algebraic dynamical systems.

Finally, another motivation comes from isometric group actions on Banach spaces. Due to the functoriality of the Lipschitz-free space construction, given a finitely generated group $G$, the action of $G$ on itself by left multiplication induces a metrically proper action of $G$ on $\FF(G)$ by affine isometries. This idea was discussed in \cite[Section 4]{CaCuDo19}, where it was asked, in particular, whether $\FF(G)\simeq\ell_1$ for every hyperbolic group $G$. This question was recently answered positively in \cite{Ga25}. As explained in \cite{CaCuDo19}, this has an interesting consequence for groups with property (T). It follows from \cite{Ga25} (see also \cite{DrMa23}) that there are hyperbolic groups with property (T) which therefore act metrically properly on a space isomorphic to $\ell_1$, although they have a fixed-point property for such actions on $L_1$-spaces (see \cite{BFGM07,BaGeMo12} for details about these properties). An analogous question of Shalom for spaces isomorphic to a Hilbert space remains open. Since the class of Banach spaces arising directly as Lipschitz-free spaces over finitely generated groups appears rather restricted, we suggest looking beyond the spaces $\FF(G)$ themselves. The preduals of invariant pointwise closed subspaces considered here provide a natural larger class of quotients of $\FF(G)$, and their invariance gives rise to canonical affine isometric actions of $G$. We do not pursue this direction in the present paper, but regard it as a motivation for further research.

\subsection{Main results}
The paper has two main goals. The first is to introduce and motivate for further research invariant pointwise closed subspaces of Lipschitz spaces over graphs, and their preduals, as a new class of Banach spaces. The second is to develop a useful machinery for their study which culminates in a strong dichotomy for grids $\Int^d$ as graphs.

Regarding the latter, we show, among other things.
\begin{enumerate}
    \item Every invariant pointwise closed subspace of $\Lip_0(\GG)$ is determined by local constraints. When $\GG$ is a finitely generated group, it can be described using kernels of convolution operators. For polycyclic groups, finitely many such operators always suffice (see \Cref{thm:forbidpatterns,thm:wstartinvariantdescription,prop:fin-pres}).
    \item We explicitly describe preduals of such invariant pointwise closed subspaces using group algebra elements and we characterize them, uniquely up to linear isometry, as Banach spaces satisfying a certain universal property (see \Cref{thm:predual}, \Cref{prop:univ-property}).
    \item Whenever an invariant pointwise closed subspace $X$ of $\Lip_0(\GG)$ contains a non-zero element with a $c_0$-gradient, then it contains $\ell_\infty$, thus the predual of $X$ contains a complemented copy of $\ell_1$ (see \Cref{thm:linftyembeds}).
\end{enumerate}

These results culminate in a strong dichotomy. To motivate it, we emphasize that a desirable outcome is to find an invariant pointwise closed subspace of a Lipschitz space which is infinite-dimensional and separable, or even reflexive. To this end, the main result of the paper is to show that this is impossible for finite-dimensional grids, i.e. for $\Int^d$, where $d\geq 1$.

\begin{theorem}[see \Cref{thm:mainresult}]
Every translation-invariant pointwise closed subspace of $\Lip_0(\Int^d)$, where $d\geq 1$, is either finite-dimensional or non-separable.
\end{theorem}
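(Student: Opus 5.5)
We plan to translate the problem into harmonic analysis on $\Int^d$ via its Pontryagin dual, the torus $\TT^d$. Let $X\subseteq\Lip_0(\Int^d)$ be translation-invariant and pointwise closed, where translation invariance is understood up to subtracting the value at $0$, i.e.\ $f\mapsto f(\cdot+g)-f(g)$. Consider the gradient map $f\mapsto (\partial_i f)_{i\le d}$, with $\partial_i f(x)=f(x+e_i)-f(x)$. It identifies $X$ isomorphically with a subspace $Y$ of $\ell_\infty(\Int^d)^d$. The space $Y$ is genuinely translation-invariant and weak$^*$-closed: boundedness together with pointwise closedness gives weak$^*$ closedness on bounded sets, and then Krein--Smulian applies. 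By the local description in \Cref{thm:wstartinvariantdescription}, $Y$ is the common solution space in $\ell_\infty$ of a family of convolution equations, i.e.\ the annihilator of a translation-invariant closed submodule $I\subseteq\ell_1(\Int^d)^d$. The curl relations $\partial_i u_j=\partial_j u_i$ belong to $I$.

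The next step is to look at spectra. Let $\Sigma\subseteq\TT^d$ be the set of characters $\chi$ such that some vector $v\chi$ with $v\in\Com^d\setminus\{0\}$ is annihilated by $I$; equivalently, $\Sigma$ is the common zero set of suitable minors of the Fourier transforms $\hat a$, $a\in I$. A function on $\Int^d$ whose gradient is a character times a vector is, up to a constant, itself a character or a linear function, the latter when $\chi=1$. Hence $\Sigma\ne\emptyset$ always leads to explicit elements of $X$, after taking real parts.

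The dichotomy is proved in two cases.

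\emph{Case 1: $\Sigma$ is finite.} We would show that $Y$ is finite-dimensional. Any $u\in Y$ has distributional Fourier transform supported in $\Sigma$; this is spectral synthesis for finite sets, which holds in $\ell_1(\Int^d)$. A bounded function with finite spectrum is a finite sum $\sum_{\chi\in\Sigma}p_\chi\chi$ with polynomials $p_\chi$. Boundedness forces each $p_\chi$ to be constant, and the vector coefficients are constrained to a finite-dimensional space. So $\dim X<\infty$.

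\emph{Case 2: $\Sigma$ is infinite.} Since $\Sigma$ is closed, it has an accumulation point, and we can pick distinct $\chi_n\in\Sigma$ converging to some $\chi_\infty$, with corresponding elements $f_n\in X$ whose gradients are $v_n\chi_n$ with $\|v_n\|=1$. The aim is an uncountable $\varepsilon$-separated family in $X$, or a copy of $\ell_\infty$. We first attempt to show that every sum $\sum_n \varepsilon_n c_n f_n$ with $|\varepsilon_n|\le1$ converges pointwise with bounded Lipschitz constant, by choosing $c_n$ rapidly decreasing and the $\chi_n$ sufficiently lacunary, so that the sum lies in $X$ by pointwise closedness. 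The norms of such sums should dominate $\sup_n|\varepsilon_n c_n|$ up to a constant, obtained by testing against approximate spectral projections, namely Fejér-type kernels in $\ell_1$ concentrated near $\chi_n$. The subfamily $\{\sum_{n\in A}c_nf_n : A\subseteq\Nat\}$ is then uncountable; but with $c_n\to0$ it is not obviously separated, and this is where care is needed.

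A better device is to exploit that a normalized gradient $v\chi$ has sup-norm $1$ at every point, not only on average. Since $\chi_n\neq\chi_m$, the characters differ on some coordinate direction, so one can choose finite disjoint boxes $B_n\subseteq\Int^d$, far apart, on which $\chi_n$ nearly agrees with a fixed phase while the contributions of other $\chi_m$ are controlled. Gluing via $\sum_{n\in A}f_n$ directly fails to converge. Instead, for each $A\subseteq\Nat$ we take a pointwise limit of suitable finite combinations in $X$, shifted by translations; compactness in the pointwise topology on bounded sets guarantees that a limit exists. We then verify that distinct $A$ yield limits whose difference has norm $\ge\varepsilon$.

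The main obstacle is this separation argument in Case 2: producing an uncountable separated family from infinitely many spectral points using only pointwise limits. If gluing is too delicate, a fallback is available. If $\chi_\infty$ is an accumulation point of $\Sigma$ and one can produce an element of $X$ with $c_0$-gradient, for instance by a wave-packet superposition $\int\phi(\theta)\,v(\theta)\chi_\theta\,d\theta$ over a small arc of $\Sigma$, whose decay follows from Riemann--Lebesgue, then \Cref{thm:linftyembeds} gives $\ell_\infty\hookrightarrow X$ and hence non-separability. The arc-like structure needed for this should come from $\Sigma$ being a real-algebraic subset of $\TT^d$ (for polycyclic groups finitely many equations suffice, \Cref{prop:fin-pres}), so an infinite $\Sigma$ contains a smooth curve. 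Along that curve, $v(\theta)$ can be chosen smoothly as a null vector of the polynomial symbol matrix. This route avoids the separation estimates entirely.
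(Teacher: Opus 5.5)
Your Case 1 takes a different route from the paper, and it is sound. The paper uses Shilov idempotents and spectral synthesis of singletons in $\ell_1(\Int^d)$. You instead obtain $\mathrm{supp}\,\hat u\subseteq\Sigma$ by locally inverting the symbol matrix of finitely many elements of $I$, then use that distributions supported on a finite set are sums of $\delta$-derivatives, and finally apply boundedness. Note that the support inclusion is the easy direction, not spectral synthesis.

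The genuine gap is Case 2, which is never proved. You concede the series $\sum_n\varepsilon_nc_nf_n$ gives no separation. The gluing paragraph only restates the goal (``we then verify\dots''). The fallback is false in general. A measure carried by a curve in $\TT^d$, $d\geq 2$, is singular, so Riemann--Lebesgue does not apply. When $\Sigma$ contains an arc of a coset of a subtorus, the wave packet does not decay in the orthogonal directions. Concretely, take the space $X$ of \Cref{ex:locked}. Every character with $\chi(e_2)=1$ lies in $\Sigma$, so $\Sigma$ contains a whole circle, and every such wave packet depends only on $x_1$. In fact $X$ has no nonzero element with $c_0$-gradient at all, since gradients of its elements are invariant under translation by $e_2$. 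So \Cref{thm:linftyembeds} can never be invoked there, even though $X\equiv\ell_\infty$.

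The missing idea is that no superposition or gluing is needed: the characters themselves already form an uncountable, uniformly separated family. You already have $\chi-1$ in the complexification of $X$ for every $\chi\in\Sigma\setminus\{1\}$. An infinite real-algebraic $\Sigma$ is uncountable; your smooth-curve remark essentially contains this, and it is where finite presentation enters. Write $\chi_j=\chi(e_j)$ and $\partial_jf(x)=f(x)-f(x-e_j)$. For distinct such $\chi,\chi'$ and any $j\leq d$,
\[
\Lip(\chi-\chi')\geq\sup_{x\in\Int^d}|\partial_j\chi(x)-\partial_j\chi'(x)|=\sup_{x\in\Int^d}|(1-\chi_j^{-1})-(1-\chi_j'^{-1})\,\overline{\chi(x)}\chi'(x)|.
\]
Since $\overline{\chi}\chi'$ is a nontrivial character, its range is either dense in $\TT$ or the full group of $m$-th roots of unity for some $m\geq 2$. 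In the first case use density; in the second, average $|u-v\zeta|^2$ over the roots of unity. Either way the supremum is at least $\max\{|1-\chi_j|,|1-\chi'_j|\}$. Now choose $j$ and $\lambda>0$ such that uncountably many $\chi\in\Sigma$ satisfy $|1-\chi_j|>\lambda$. This yields an uncountable $\lambda$-separated subset of the complexification of $X$, hence $X$ is non-separable. This is exactly the paper's Case 2. There, the author restricts to pairs with $\chi_j\overline{\chi'_j}$ of infinite order and uses density of its powers along the $j$-th axis.
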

\section{Preliminaries}
\subsection{Graph-theoretic notions}
In this note, by a graph $\GG$ we always mean a pair $\GG=(V_\GG,E_\GG)$, where $V_\GG$ is a set of vertices and $E_\GG\subseteq V_\GG^2$ is a symmetric set of (directed) edges. That is, if $(v,w)\in E_\GG$, where $v,w\in V_\GG$ are not necessarily distinct vertices, then also $(w,v)\in E_\GG$. For $e\in E_\GG$, we denote by $e^{-1}$ its inverse. Moreover, we consider maps $\alpha:E_\GG\to V_\GG$ and $\omega:E_\GG\to V_\GG$ so that for every $e$ we have $e=(\alpha(e),\omega(e))$.

\begin{definition}
Given a graph $\GG$, by $d_\GG$, or just $d$ if there is no risk of confusion, we denote the \emph{graph metric} on $\GG$. That is, given $v,w\in V_\GG$ \[\begin{split}d(v,w):=\min\Big\{& n\in\Nat\cup\{0\}\colon \exists v_0=v,\ldots,v_n=w\in V_\GG\\ &\forall 0\leq i<n\; \big((v_i,v_{i+1})\in E_\GG\big)\Big\},\end{split}\] assuming such $n\in\Nat\cup\{0\}$ exists. Otherwise, we set $d(v,w)=\infty$.
\end{definition}

We denote by $\Aut(\GG)$ the group of all automorphisms of $\GG$. We see each element $\phi\in\Aut(\GG)$ as a bijection of $V_\GG$ that preserves $E_\GG$. In one occasion we will need a topology on $\Aut(\GG)$, which will be the pointwise convergence topology.\medskip

\noindent\textbf{Convention.} In the sequel, unless stated otherwise, all ambient graphs $\GG$ are assumed to be connected, locally finite, vertex-transitive, infinite, and equipped with a distinguished vertex 0.

Note in particular that on each such graph $\GG$ the metric is finite and proper.

\subsubsection{Finitely generated groups as graphs and metric spaces} A natural source of such graphs is finitely generated groups. Let $G$ be an infinite group generated by a finite subset $S\subseteq G$, which is assumed to be symmetric, i.e. $S^{-1}:=\{s^{-1}\colon s\in S\}=S$. We define a graph denoted by $\GG_G$, or by abusing the notation, simply by $G$, where $V_{\GG_G}:=G$ and \[E_{\GG_G}:=\{(g,gs)\colon g\in G, s\in S\}.\] Such a graph is connected, locally finite, and we naturally take the group unit as the distinguished vertex (which we shall denote both by $1_G$ and $0$ depending on the context).

\subsection{Lipschitz functions on graphs}
\begin{definition}
Let $\GG$ be a graph. By $\Lip_0(\GG)$ we denote the Banach space of all real-valued Lipschitz maps vanishing at the distinguished vertex $0\in V_\GG$ with the norm, denoted by $\Lip(\cdot)$, being the Lipschitz constant. That is, given $f\in\Lip_0(\GG)$, we have \[\Lip(f):=\sup_{v\neq w\in V_\GG}\frac{f(v)-f(w)}{d(v,w)}.\]
\end{definition}
We shall need a few more notions related to the space $\Lip_0(\GG)$ that we define below.

\begin{definition}
Given any real-valued function $f:V_\GG\to\Rea$ we denote by $\nabla f:E_\GG\to\Rea$ its \emph{gradient}, defined on the set of edges as follows: \[\nabla f(e):=f(\omega(e))-f(\alpha(e)),\quad e\in E_\GG.\]
\end{definition}
The following is a basic fact whose proof is left to the reader.
\begin{fact}
Given a real-valued $f:V_\GG\to\Rea$, we have that $f$ is Lipschitz if and only if $\nabla f\in \ell_\infty(E_\GG)$.
\end{fact}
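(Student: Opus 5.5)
This is a standard fact, and I would prove the two directions separately, using only the definition of the graph metric and of $\Lip(\cdot)$.

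\emph{Lipschitz implies bounded gradient.} Suppose $f$ is Lipschitz with constant $L$. Let $e=(\alpha(e),\omega(e))\in E_\GG$. If $\alpha(e)\neq\omega(e)$, then $d(\alpha(e),\omega(e))=1$, so
\[|\nabla f(e)|=|f(\omega(e))-f(\alpha(e))|\leq L.\]
If $e$ is a loop, then $\nabla f(e)=0$. In both cases $\|\nabla f\|_\infty\leq \Lip(f)$. This holds regardless of whether $f(0)=0$.

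\emph{Bounded gradient implies Lipschitz.} Suppose $\nabla f\in\ell_\infty(E_\GG)$ with norm $C$. Take $v\neq w$. By the standing convention $\GG$ is connected, so $d(v,w)=n<\infty$. By the definition of $d_\GG$ there is a path $v=v_0,\ldots,v_n=w$ with $(v_i,v_{i+1})\in E_\GG$ for each $i$. Telescoping along this path gives
\[|f(w)-f(v)|\leq\sum_{i<n}|\nabla f(v_i,v_{i+1})|\leq nC=C\,d(v,w).\]
Hence $\Lip(f)\leq C$, and in fact $\Lip(f)=\|\nabla f\|_\infty$.

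The only point needing care is that the telescoping uses a \emph{geodesic} path, one of length exactly $d(v,w)$, which exists because the minimum in the definition of $d$ is attained. Connectedness is needed so that the distance is finite. Without it, $\Lip(f)$ would be computed over pairs at infinite distance, which contribute nothing, and the statement would still hold.
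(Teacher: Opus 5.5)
Your proof is correct and is the standard argument the paper has in mind when it leaves this fact to the reader. Non-loop edges have length $1$, so $|\nabla f(e)|\le \Lip(f)$; in the other direction you telescope along a geodesic of length $d(v,w)$. Together these also give the identity $\Lip(f)=\Lip_{E_\GG}(f)$, which the paper states immediately afterwards.
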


\begin{definition}
Let again $\GG$ be a graph and $f\in\Lip_0(\GG)$. Given a subset $C\subseteq E_\GG$, we write $\Lip_C(f)$ for the value \[\sup_{e\in C} |\nabla f(e)|.\] Clearly, $\Lip(f)=\Lip_{E_\GG}(f)$.
\end{definition}

\subsubsection{Preduals of the spaces of Lipschitz functions}
Given a graph $\GG$, the Banach space $\Lip_0(\GG)$ is a dual space that has a canonical predual called the \emph{Lipschitz-free space} over $\GG$ and denoted by $\FF(\GG)$. We refer to \cite[Chapter 3]{Weaver} for more details where these spaces are called \emph{Arens-Eells spaces}. Following \cite{Weaver}, we denote by $M[\GG]$ the space \[\{x\in\Rea^{V_\GG}\colon \text{for all but finitely many }v\in V_\GG\;\; x(v)=0\;\text{and}\;\sum_{v\in V_\GG} x(v)=0\}\] of all \emph{molecules}. It is spanned by the elements $\{m_{u,v}\colon u,v\in V_\GG\}$ called \emph{elementary molecules}, where $m_{u,v}(u)=1$, $m_{u,v}(v)=-1$, and $m_{u,v}(w)=0$, for $w\in V_\GG\setminus\{u,v\}$, and it forms a dense linear subspace of $\FF(\GG)$.

\subsubsection{Topologies on spaces of Lipschitz functions}
Given a graph $\GG$ as above, $\Lip_0(\GG)$ is a Banach space, thus it is naturally equipped with the norm topology. Since it is, as mentioned in the introduction, also a dual space to the Lipschitz-free space $\FF(\GG)$, it is also equipped with a strictly coarser (provided that $\GG$ is infinite which is the implicit assumption) topology -- the $\text{weak}^*$-topology. However, there is an even coarser natural topology on $\Lip_0(\GG)$ -- the \emph{pointwise convergence topology}, where $f_n\to f$ pointwise, for $(f_n)_n\subseteq\Lip_0(\GG)$ and $f\in \Lip_0(\GG)$, if $f_n(v)\to f(v)$ for all $v\in V_\GG$. The $\text{weak}^*$ and pointwise convergence topologies coincide on bounded subsets of $\Lip_0(\GG)$ (see \cite[Theorem 2.37]{Weaver}), however pointwise convergence is in general a strictly weaker condition than convergence in the $\text{weak}^*$-topology. As a consequence, pointwise closed subspaces of $\Lip_0(\GG)$ form a proper subclass of $\text{weak}^*$-closed subspaces of $\Lip_0(\GG)$, which will be utilized in the sequel.

The following straightforward fact, whose proof is left to the reader, makes the advantage of pointwise closedness explicit by relating it to finitely supported elements, i.e. molecules, in the predual.
\begin{fact}
Let $\GG$ be a graph as above. A linear subspace $X\subseteq \Lip_0(\GG)$ is pointwise closed if and only if for every net $(f_i)_i\subseteq X$ and $f\in \Lip_0(\GG)$, if $\langle f_i,x\rangle\to \langle f,x\rangle$, for every molecule $x\in M[\GG]$, then $f\in X$.
\end{fact}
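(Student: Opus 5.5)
The plan is to show that, on $\Lip_0(\GG)$, convergence of a net against every molecule is the same as pointwise convergence. The statement then reduces to the net characterization of closed sets in a topological space. Throughout, "pointwise closed" means closed in $\Lip_0(\GG)$ with the relative topology of pointwise convergence, i.e. the topology induced from the product topology on $\Rea^{V_\GG}$. In any topological space, a set $X$ is closed if and only if it contains every limit (in the space) of every net from $X$. So it suffices to prove that for a net $(f_i)_i\subseteq\Lip_0(\GG)$ and $f\in\Lip_0(\GG)$, $f_i\to f$ pointwise if and only if $\langle f_i,x\rangle\to\langle f,x\rangle$ for every $x\in M[\GG]$.

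First I fix the pairing. For $g\in\Lip_0(\GG)$ and a molecule $x$, it is $\langle g,x\rangle=\sum_{v}x(v)g(v)$, a finite sum; in particular $\langle g,m_{u,v}\rangle=g(u)-g(v)$.

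For the forward direction, assume $f_i\to f$ pointwise. Each molecule $x$ has finite support, so $\langle f_i,x\rangle$ is a fixed finite linear combination of the values $f_i(v)$. Since finite sums of convergent nets converge, $\langle f_i,x\rangle\to\langle f,x\rangle$.

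For the converse, assume molecule convergence and fix $v\in V_\GG$. Test against the elementary molecule $m_{v,0}$. Because $f_i(0)=f(0)=0$ for functions in $\Lip_0(\GG)$, we get $f_i(v)=\langle f_i,m_{v,0}\rangle\to\langle f,m_{v,0}\rangle=f(v)$. The case $v=0$ is trivial.

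With this equivalence, the two conditions in the Fact say the same thing, by the net characterization of closedness. If $X$ is pointwise closed and $(f_i)\subseteq X$ converges to $f\in\Lip_0(\GG)$ against all molecules, then $f_i\to f$ pointwise, so $f\in X$. Conversely, if the molecule condition holds, then every pointwise limit in $\Lip_0(\GG)$ of a net from $X$ lies in $X$, so $X$ is closed.

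There is no real obstacle here. The only points needing care are the use of the normalization $f(0)=0$ to recover point values from differences, and the remark that closedness is taken relative to $\Lip_0(\GG)$. The latter is why the statement only quantifies over limits $f\in\Lip_0(\GG)$.
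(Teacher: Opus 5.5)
Your proof is correct. The paper leaves this fact to the reader, and your argument is exactly the intended routine verification: on $\Lip_0(\GG)$, convergence against all molecules coincides with pointwise convergence, since molecules are finitely supported and $\langle g,m_{v,0}\rangle=g(v)$ by the normalization $g(0)=0$, and this is combined with the net characterization of closed sets in the relative pointwise topology.
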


Tautologically, by definition, a linear subspace $X\subseteq\Lip_0(\GG)$ is $\text{weak}^*$-closed if the above fact holds true replacing molecules by arbitrary elements $x\in\FF(\GG)$.
\subsection{Actions on function spaces}
Given a set $X$, a function (here taken real-valued) $f:X\to\Rea$, and a bijection $\phi:X\to X$, we can apply $\phi$ to $f$ via precomposition, i.e. taking $f\circ\phi$. In the language of group actions, given a subgroup $H\leq \Aut(X)$ of bijections of $X$, there is a natural action of $H$ on $\Rea^X$, the vector space of real-valued functions on $X$ by shifting the domain. That is, for $h\in H$ and $f\in \Rea^X$, the action of $h$ on $f$ results in a function $h\bullet f$ defined by \[h\bullet f(x)=f(h^{-1}x),\quad x\in X.\]

Let $H\leq \Aut(\GG)$ be a subgroup of $\Aut(\GG)$. Usually, but not always, we shall require that $H$ still acts transitively on $V_\GG$, in which case we call $H$ a \emph{transitive subgroup}. If $\GG=G$ for some finitely generated group $G$, then for $H$ we typically take $G$ acting on itself by left translations seen as a subgroup of $\Aut(\GG)$.

Notice that the action `$\bullet$' of $H$ on $\Rea^{V_\GG}$ however does not leave $\Lip_0(\GG)$ invariant, since for $h\in H$ and $f\in\Lip_0(\GG)$ we do not necessarily have $h\bullet f(0)=0$.  We shall define a different action exactly for this purpose. Given $h\in\Aut(\GG)$ and $f\in\Lip_0(\GG)$ we set \[h\cdot f(x):=f(h^{-1}x)-f(h^{-1}0).\] Notice that the $\cdot$-action does preserve $\Lip_0(\GG)$ and is by linear isometries.

\begin{definition}
Given a graph $\GG$ and a (usually transitive) $H\leq \Aut(\GG)$, a linear subspace $X\subseteq\Lip_0(\GG)$ is \emph{$H$-invariant}, or just \emph{invariant} if $H$ is clear from the context (e.g. if $\GG=G$ for some finitely generated group, then $H$ is usually assumed to be $G$), if \[h\cdot X:=\{h\cdot f\colon f\in X\}=X,\quad \forall h\in H.\]
\end{definition}

\begin{example}
Let $\GG$ be a graph. The subspace $\BLip_0(\GG)$ of \emph{bounded Lipschitz functions vanishing at $0$} is $\Aut(\GG)$-invariant.
\end{example}

\begin{example}
Given a graph $\GG$, define \[\Lip_0^\infty(\GG):=\{f\in\Lip_0(\GG)\colon \nabla f\in c_0(E_\GG)\},\] which is an $\Aut(\GG)$-invariant subspace.
\end{example}
Since the intersection of invariant subspaces is an invariant subspace, following the notation from the previous examples, we get that $\BLip_0(\GG)\cap \Lip_0^\infty(\GG)$ is $\Aut(\GG)$-invariant.

This last subspace will appear later and we prove the following useful fact.
\begin{lemma}\label{lem:invariantinfinite}
Let $\GG$ be a graph and $H\leq\Aut(\GG)$ be a co-compact subgroup (i.e. $\Aut(\GG)/H$, or equivalently $H\backslash\Aut(\GG)$, is compact). Then every $H$-invariant subspace of $\Lip_0^\infty(\GG)$ is either trivial or infinite-dimensional.

In particular, $\Lip_0^\infty(\GG)$ is either trivial or infinite-dimensional.
\end{lemma}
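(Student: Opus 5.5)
Suppose, towards a contradiction, that $X\subseteq\Lip_0^\infty(\GG)$ is a non-zero $H$-invariant subspace with $\dim X=n<\infty$. The plan is to turn $H$-invariance into invariance of the space of gradients and then use compactness of a finite-dimensional unit sphere.

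\textbf{Reduction to gradients.} Consider $\nabla X:=\{\nabla f\colon f\in X\}\subseteq c_0(E_\GG)$. Since $\nabla f=0$ forces $f$ to be constant on the connected graph $\GG$, and $f(0)=0$, the map $\nabla$ is injective on $X$, so $\dim\nabla X=n$. The correction term $f(h^{-1}0)$ in the $\cdot$-action is a constant, so
\[\nabla(h\cdot f)(e)=\nabla f(h^{-1}e).\]
Hence $\nabla X$ is invariant under the honest permutation action of $H$ on functions on $E_\GG$. Moreover $\Lip(f)=\|\nabla f\|_\infty$, so the unit sphere $K$ of $\nabla X$ in the sup norm is compact.

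\textbf{Main step.} The aim is a uniform decay estimate that contradicts invariance. Fix an edge $e_0$ at $0$, and put $c:=\sup_{\varphi\in K}|\varphi(e_0)|$. I would first show $c>0$. Take any $\varphi\in K$ and an edge $e$ with $|\varphi(e)|>1/2$. Since $H$ is co-compact and $\GG$ is vertex-transitive and locally finite, the $H$-orbits on edges should be finitely many. To see this, one uses that $\Aut(\GG)$ is locally compact and the stabilizer of $0$ is open and compact, so co-compactness gives finitely many $H$-orbits of vertices, and local finiteness then gives finitely many edge orbits. Enlarging the finite set $\{e_0\}$ to a finite set $F$ of representatives of all edge orbits, invariance gives some $\psi\in K$ and some $e'\in F$ with $|\psi(e')|>1/2$.

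Now use the $c_0$ property to reach a contradiction. Since $K$ is compact in $c_0(E_\GG)$, it is uniformly $c_0$: by total boundedness, there is a finite set $D\subseteq E_\GG$ with $|\varphi(e)|<1/4$ for all $\varphi\in K$ and all $e\notin D$. On the other hand, $H$ acts on edges with infinite orbits. This holds because $\GG$ is infinite, the orbits are finitely many, and hence each orbit is infinite. So there exist $h\in H$ and $e'\in F$ with $h e'\notin D$. The element $h\cdot\psi$, suitably renormalized, lies in $K$ up to norm $1$, since the action is isometric, and it equals $\psi\circ h^{-1}$. It takes a value of modulus greater than $1/2$ on the edge $he'\notin D$, which contradicts the choice of $D$. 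Therefore $\dim X<\infty$ is impossible unless $X=0$.

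\textbf{Expected obstacle.} The delicate point is the claim that $H$-orbits on vertices, and hence on edges, are finite in number and each infinite. This requires a careful translation of co-compactness of $H$ in the pointwise-convergence topology on $\Aut(\GG)$ into the statement that $V_\GG$ has finitely many $H$-orbits. That translation uses the compact open vertex stabilizers together with transitivity of $\Aut(\GG)$. The last sentence of the lemma follows by taking $H=\Aut(\GG)$, using the earlier Example showing that $\Lip_0^\infty(\GG)$ is $\Aut(\GG)$-invariant.
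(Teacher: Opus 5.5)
Your proof is correct in substance, but it takes a somewhat different route from the paper's, and one step is justified by an invalid inference.

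\textbf{Comparison with the paper.} The paper also argues by contradiction. It uses finite-dimensionality to find a finite ball $C$ around $0$ such that $\Lip_{E_C}$ is an equivalent norm on $X$. It then fixes a single $0\neq f\in X$ and slides the window $E_C$ off to infinity. Since $\Lip(h_n\cdot f)=\Lip(f)$ while $\Lip_{E_C}(h_n\cdot f)\to 0$ by the $c_0$-property of $\nabla f$, the norm equivalence fails. You dualize this: compactness of the unit sphere $K$ of $\nabla X$ gives uniformly small tails outside a finite set $D$, and you translate one large value out of $D$. Both arguments rest on the same fact, namely that finite-dimensionality gives uniform control by finitely many edges. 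The paper's version needs less, though: it only needs $H\cdot 0$ to be unbounded, which follows from the remark before the proof that co-compactness makes $H\cdot 0$ co-bounded. Yours needs the orbit of the particular edge $e'$ to be infinite.

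\textbf{The flawed step.} You write that the orbits are finitely many and ``hence each orbit is infinite''. This does not follow: finitely many orbits covering an infinite set only force \emph{some} orbit to be infinite. The fix uses that $H$ acts by isometries. Since $d(hv,h0)=d(v,0)$, each vertex orbit $Hv$ and $H\cdot 0$ lie within distance $d(v,0)$ of each other. By local finiteness, if one vertex orbit were finite, all would be. Finitely many finite orbits cannot exhaust the infinite set $V_\GG$, so every vertex orbit is infinite. Since $\alpha$ maps $He'$ onto $H\alpha(e')$, every edge orbit is infinite too. With this repair the proof is complete.

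\textbf{Other remarks.} Your derivation of finitely many vertex orbits from co-compactness is fine: the stabilizer $\mathcal{K}$ of $0$ is compact and open, and the images of the double cosets $Hg\mathcal{K}$ form an open partition of the compact space $H\backslash\Aut(\GG)$. The quantity $c$ and the set $F$ of orbit representatives are never used and can be dropped. No renormalization of $h\cdot\psi$ is needed, since $\varphi\mapsto\varphi\circ h^{-1}$ preserves the sup norm.
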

Outside of the main body of the proof, we remark that the condition that $H$ is co-compact is equivalent to the condition that there exists a co-bounded set of vertices $V'\subseteq V_\GG$ such that for each $v\in V'$ there is $h\in H$ such that $h\cdot 0=v$, where a subset $V'\subseteq V_\GG$ is \emph{co-bounded} if there is $K>0$ such that for all $v\in V_\GG$ there is $v'\in V'$ with $d(v,v')\leq K$. Indeed, set $V':=\{h\cdot 0\colon h\in H\}$. Suppose first that $H$ is co-compact. If $V'$ is not co-bounded, then we can find a sequence $(B_n)_{n=0}^\infty$, where $B_n$ is a ball of radius $2^n$ around $v_n\in V_\GG$, $n\in\Nat\cup\{0\}$, and $v_0=0$, such that for every $n>0$ and $w\in B_n$ there are no $v\in \bigcup_{i=0}^{n-1} B_i$ and $h\in H$ such that $h\cdot v=w$. For each $v\in \bigcup_{n=1}^\infty B_n$ there is $g_v\in\Aut(\GG)$ such that $g_v\cdot 0=v$. By co-compactness of $H$, i.e. compactness of $H\backslash \Aut(\GG)$, there is a converging subsequence $(Hg_{v_n})_{n=1}^\infty$, where $(v_n)_n\subseteq\bigcup_{m=1}^\infty B_m$. In particular, for all large enough $n,m$ there is $h\in H$ such that $g_{v_n}^{-1}h^{-1}g_{v_m}\cdot 0=0$, i.e. $h\cdot v_m=v_n$, a contradiction.

Conversely, suppose $V'$ is co-bounded, witnessed by some constant $K$. Let $(g_n)_{n=1}^\infty\subseteq \Aut(\GG)$ be an arbitrary sequence, we show that $(Hg_n)_n$ has a converging subsequence in $H\backslash\Aut(\GG)$. For each $n$, let $h_n\in H$ be such that $d(g_n\cdot 0, h_n\cdot 0)\leq K$. Passing to a subsequence, if necessary, we may assume that there is $v\in V_\GG$, satisfying $d(0,v)\leq K$, such that for all $n$, $h_n^{-1}g_n\cdot 0=v$. It follows that for each $n,m$, $\big(h_m^{-1}g_m\big)^{-1}h_n^{-1}g_n\in \mathcal{K}:=\{g\in\Aut(\GG)\colon g\cdot 0=0\}$. Since $\mathcal{K}$ is compact, we can, by passing to a subsequence if necessary, assume that $(h_n^{-1}g_n)_n$ is convergent, which finishes the proof.
\begin{proof}
We fix a non-zero $H$-invariant subspace $X\subseteq \Lip_0^\infty(\GG)$ and let us suppose that it is finite-dimensional. Then there is a finite ball $0\in C\subseteq V_\GG$ such that for every $f\in X$, if $f\restriction C\equiv 0$, then $f=0$. Let $E_C\subseteq E_\GG$ be the edges incident with at least one vertex from $C$. It follows that $\Lip_{E_C}$ is an equivalent norm on $X$, i.e. there is $K>0$ such that for every $f\in X$
\begin{equation}\label{eq:lem:invariantinfinite}
    K\cdot\Lip(f)\leq \Lip_{E_C}(f)\leq \Lip(f).
\end{equation}
Pick $0\neq f\in X$. Since $\nabla f\in c_0(E_\GG)$, for every $\varepsilon>0$, for all but finitely many $e\in E_\GG$, $|\nabla f(e)|<\varepsilon$. However, then, since $H$ is co-compact, there exists a sequence $(h_n)_{n\in\Nat}\subseteq H$ such that $\Lip_{E_C}(h_n\cdot f)<1/n$, for every $n\in\Nat$. This contradicts \eqref{eq:lem:invariantinfinite} for large enough $n$.
\end{proof}

\subsubsection{Relation between the two actions} The action `$\cdot$' is sometimes more difficult to deal with than the simple shift action `$\bullet$'. There is, however, a simple remedy. Given a graph $\GG$, denote by $\cons_\GG$, or just $\cons$ if $\GG$ is clear from the context, the one-dimensional space of constant real-valued functions on $V_\GG$. We shall work with $\Lip_0(\GG)\oplus \cons_\GG$ (if the isometry type of the space is important, we always mean the $\ell_\infty$-sum, i.e. $\Lip_0(\GG)\oplus \cons$ is implicitly meant to be $\Lip_0(\GG)\oplus_{\ell_\infty} \cons$).

We also note that the term `invariant subspace' is reserved for a vector subspace invariant with respect to the $\cdot$'-action. Spaces invariant under the $\bullet$-action are called \emph{$\bullet$-invariant}.

\begin{proposition}\label{prop:invsp-corresp}
Given a graph $\GG$ and a subgroup $H\leq \Aut(\GG)$, there is a one-to-one correspondence between (pointwise/$\text{weak}^*$/norm-closed) invariant subspaces of $\Lip_0(\GG)$ and (pointwise/$\text{weak}^*$/norm-closed) $\bullet$-invariant subspaces of $\Lip_0(\GG)\oplus \cons$ containing $\cons$.
\end{proposition}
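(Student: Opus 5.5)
The correspondence sends an invariant subspace $X\subseteq\Lip_0(\GG)$ to $\Phi(X):=X\oplus\cons\subseteq\Lip_0(\GG)\oplus\cons$. In the other direction it sends a $\bullet$-invariant subspace $Y\supseteq\cons$ to $\Psi(Y):=Y\cap\Lip_0(\GG)$. First I identify $\Lip_0(\GG)\oplus\cons$ with the space of all Lipschitz functions on $V_\GG$ via $(f,c)\mapsto f+c$; its inverse is $g\mapsto(g-g(0),g(0))$. This is a linear bijection, and with the $\ell_\infty$-sum norm $\max(\Lip(g-g(0)),|g(0)|)$ it is a Banach space on which $\bullet$ acts. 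The key identity is
\[
h\cdot f = h\bullet f - (h\bullet f)(0),
\]
that is, $h\cdot f$ is the image of $h\bullet f$ under the projection $P:g\mapsto g-g(0)$. This projection is onto $\Lip_0(\GG)$ and has kernel $\cons$. Moreover, $h\bullet c=c$ for constants $c$.

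\emph{The maps preserve invariance.} Let $X$ be invariant. For $f\in X$, $c\in\cons$ and $h\in H$ we have $h\bullet(f+c)=h\cdot f+\big((h\bullet f)(0)+c\big)$, which lies in $X\oplus\cons$. So $\Phi(X)$ is $\bullet$-invariant and contains $\cons$. Conversely, let $Y\supseteq\cons$ be $\bullet$-invariant and take $f\in\Psi(Y)$. Then $h\bullet f\in Y$, and since $\cons\subseteq Y$ also $h\bullet f-(h\bullet f)(0)=h\cdot f\in Y\cap\Lip_0(\GG)$. Hence $h\cdot\Psi(Y)\subseteq\Psi(Y)$ for every $h\in H$. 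Applying this to $h^{-1}$ gives equality, so $\Psi(Y)$ is invariant.

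\emph{The maps are mutually inverse.} We have $\Psi(\Phi(X))=(X\oplus\cons)\cap\Lip_0(\GG)=X$, because a constant vanishing at $0$ is zero. Also $\Phi(\Psi(Y))=Y$: for $g\in Y$ we have $g-g(0)\in Y$ since $\cons\subseteq Y$, so $g\in\Psi(Y)\oplus\cons$; the reverse inclusion is clear.

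\emph{The maps preserve closedness.} This is the step needing some care, mainly in fixing the right topologies. On $\Lip_0(\GG)\oplus\cons$ I take the norm topology, the product of the weak$^*$ topology with the usual one on $\cons\cong\Rea$ (weak$^*$ for the predual $\FF(\GG)\oplus_{\ell_1}\Rea$), and the pointwise topology on functions on $V_\GG$. The latter is the product topology, because $g\mapsto g(0)$ and $g\mapsto g-g(0)$ are pointwise continuous.

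In each of the three topologies, $X\oplus\cons$ is closed exactly when $X$ is closed. One direction holds because a sum of a closed subspace and a one-dimensional complementary summand is closed in the product; concretely, $X\oplus\cons=(P')^{-1}(X)$ with $P':(f,c)\mapsto f$ continuous. The other direction holds because $X=(X\oplus\cons)\cap\Lip_0(\GG)$, and $\Lip_0(\GG)$ embeds as a closed subspace via $f\mapsto(f,0)$ in each topology. For the pointwise case this uses that $\{g: g(0)=0\}$ is pointwise closed.

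The only subtle point is that the weak$^*$ topology on the sum is defined through the product predual. With that convention every step above is a routine continuity check.
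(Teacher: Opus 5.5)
Your proof is correct and matches the paper's argument. You use the same correspondence $X\mapsto X\oplus\cons$ and the same identity $h\bullet(f+c)=h\cdot f+\big(c+f(h^{-1}0)\big)$, and your inverse map $Y\mapsto Y\cap\Lip_0(\GG)$ equals the paper's $P[Y]$ because $\cons\subseteq Y$. The one real difference is the weak$^*$ case: the paper appeals to Weaver's Theorem~2.37 (weak$^*$-closed subspaces are the norm-closed ones whose unit ball is pointwise closed), whereas you observe that the weak$^*$ topology from the predual $\FF(\GG)\oplus_{\ell_1}\Rea$ is the product topology, so closedness transfers through the coordinate projection and inclusion uniformly in all three topologies, which is slightly more self-contained.
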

\begin{proof}
Let $X\subseteq \Lip_0(\GG)$ be a vector subspace invariant with respect to the $\cdot$-action of $H$. Consider $X\oplus\cons\subseteq \Lip_0(\GG)\oplus\cons$. Consider $x\oplus c\in X\oplus\cons$, where $x\in X$ and $c\in \cons$. Then \[h\bullet (x\oplus c)=h\cdot x\oplus (c+x(h^{-1}0))\in X\oplus \cons,\quad h\in H,\] since by the assumption $h\cdot x\in X$.

Conversely, let $\cons\subseteq Y\subseteq \Lip_0(\GG)\oplus\cons$ be invariant with respect to the $\bullet$-action. Let $X:=P[Y]$, where $P:\Lip_0(\GG)\oplus\cons\to\Lip_0(\GG)$ is the canonical projection. Pick $f\in X$ and $h\in H$. Since $\cons\subseteq Y$, $f\oplus c\in Y$, for every $c\in\cons$, in particular, $f\in Y$. Thus, \[h\cdot f=(h\bullet f)\oplus -f(h^{-1}0)=P(h\bullet f)\in X,\] showing that $X$ is invariant as $f$ and $h$ were arbitrary. Moreover, it is clear that $Y=X\oplus \cons$, thus the map is one-to-one.

It is plain that the map sends norm-closed subspaces to norm-closed subspaces and pointwise closed subspaces to pointwise closed subspaces. The same is true also for $\text{weak}^*$-closed subspaces since it follows from \cite[Theorem 2.37]{Weaver} these are norm-closed subspaces whose unit balls are closed under pointwise convergence.
\end{proof}

\section{Invariant pointwise closed subspaces of $\Lip_0(\GG)$}

In this section, we describe invariant pointwise closed subspaces of $\Lip_0(\GG)$ using methods from linear symbolic dynamics. We refer the reader to \cite[Chapter 8]{CeCo-book23} for more information on this subject.
\subsection{The convolution operator description}
Let $A\subseteq V_\GG$ be a non-empty finite subset. By $\Rea^A$ we denote the finite-dimensional real vector space isomorphic to $\Rea^{|A|}$. By a \emph{pattern}, adapting the notation from symbolic dynamics, we mean a vector subspace of $\Rea^A$, for some finite $A\subseteq V_\GG$. Given a pattern $P$, by the \emph{support} of the pattern $P$, we mean the finite subset, denoted $A_P\subseteq V_\GG$, such that $P\subseteq \Rea^{A_P}$.
\begin{definition}
Let $\GG$ be a graph, $H\leq \Aut(\GG)$ a subgroup, $X\subseteq\Lip_0(\GG)$ an $H$-invariant linear subspace, and $P\subseteq \Rea^{A_P}$ a pattern. Given $f\in\Lip_0(\GG)$, we say that $f$ is compatible with $P$  if $h\cdot f\restriction A_P\in P$, for every $h\in H$.
\end{definition}
The following is a useful way of describing invariant pointwise closed subspaces.
\begin{proposition}\label{thm:forbidpatterns}
Let $\GG$ be a graph and $H\leq \Aut(\GG)$ be a subgroup. An invariant linear subspace $X\subseteq\Lip_0(\GG)$ is a pointwise closed invariant subspace of $\Lip_0(\GG)$ if and only if there exists a set $\PP$ of patterns such that $X$ consists precisely of those elements $f\in\Lip_0(\GG)$ compatible with each $P\in\PP$.    
\end{proposition}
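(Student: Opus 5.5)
We prove the two implications separately. The backward direction is routine; the forward direction requires choosing the right family of patterns, namely all restrictions of $X$ to finite sets.

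\emph{Sufficiency.} Suppose $X=\{f\in\Lip_0(\GG)\colon f \text{ is compatible with every } P\in\PP\}$. We must check that $X$ is pointwise closed. Let $(f_i)_i\subseteq X$ be a net converging pointwise to $f\in\Lip_0(\GG)$, and fix $h\in H$ and $P\in\PP$. Since $h\cdot f_i(x)=f_i(h^{-1}x)-f_i(h^{-1}0)$, the net $h\cdot f_i$ converges pointwise to $h\cdot f$. In particular $h\cdot f_i\restriction A_P\to h\cdot f\restriction A_P$ in the finite-dimensional space $\Rea^{A_P}$. The pattern $P$ is a linear subspace of a finite-dimensional space, hence closed, so $h\cdot f\restriction A_P\in P$. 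Thus $f$ is compatible with every $P\in\PP$, i.e. $f\in X$. Invariance of $X$ is part of the hypothesis (and in fact it also follows, since compatibility of $f$ with $P$ passes to $g\cdot f$, because $h\cdot(g\cdot f)=(hg)\cdot f$).

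\emph{Necessity.} Let $X$ be invariant and pointwise closed. For each finite $A\subseteq V_\GG$ put $P_A:=\{f\restriction A\colon f\in X\}\subseteq\Rea^A$, which is a pattern, and let $\PP:=\{P_A\colon A\subseteq V_\GG \text{ finite}\}$. Every $f\in X$ is compatible with each $P_A$: by invariance $h\cdot f\in X$ for every $h\in H$, so $h\cdot f\restriction A\in P_A$. Conversely, let $f\in\Lip_0(\GG)$ be compatible with all $P_A$. Using $h=1$, for each finite $A$ there is $f_A\in X$ with $f_A\restriction A=f\restriction A$. Directing the finite sets by inclusion, the net $(f_A)_A$ converges pointwise to $f$, since for each vertex $v$ we have $f_A(v)=f(v)$ once $v\in A$. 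Pointwise closedness of $X$ then gives $f\in X$.

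The main subtlety is that pointwise closedness must be used with nets rather than sequences, together with the identity $h\cdot(g\cdot f)=(hg)\cdot f$. There is no real obstacle beyond this. Note that in the necessity direction we did not even need the full strength of compatibility, since $h=1$ suffices, while invariance of $X$ ensures that elements of $X$ satisfy the compatibility condition for all $h\in H$.
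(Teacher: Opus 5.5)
Your proof is correct and follows essentially the same route as the paper. The paper also uses the restriction patterns $\{f'\restriction A\colon f'\in X\}$ and pointwise closedness; it argues by contradiction for each $f\notin X$ along an exhausting sequence $(A_n)_n$, whereas you take all finite $A$ at once and argue directly. One small correction to your closing remark: nets are not actually needed in the necessity direction. Since $V_\GG$ is countable (the graph is connected and locally finite), an increasing exhausting sequence of finite sets suffices, as in the paper.
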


\begin{proof}
First, suppose that $\PP$ is a set (finite or infinite) of patterns such that the invariant linear subspace $X$ consists precisely of those elements $f\in\Lip_0(\GG)$ compatible with each $P\in\PP$. Denoting for each $P\in\PP$, by $L_P$ the set \[\{f\in\Lip_0(\GG)\colon \forall h\in H\; (h\cdot f\restriction A_P\in P)\},\] we have $X=\bigcap_{P\in\PP} L_P$. Clearly, each $L_P$ is an invariant pointwise closed linear subspace of $\Lip_0(\GG)$, thus $X$ is an invariant pointwise closed linear subspace as well.
\medskip

Conversely, suppose now that $X$ is an invariant pointwise closed linear subspace. Pick any $f\in\Lip_0(\GG)\setminus X$. Without loss of generality, suppose that $f\in B_{\Lip_0(\GG)}$, i.e. $\Lip(f)\leq 1$. We claim that there exists a pattern $P_f$ such that $f$ is not compatible with $P_f$, however, every element from $X$ is. As soon as the claim is proved, we are done since we can take $\PP=\{P_f\colon f\in\Lip_0(\GG)\setminus X\}$.

Let $(A_n)_{n\in\Nat}$ be an increasing sequence of non-empty finite subsets of $V_\GG$, i.e. $A_n\subseteq A_{n+1}$, for each $n$, satisfying $V_\GG=\bigcup_n A_n$. For each $n$, set $a_n:=f\restriction A_n\in \Rea^{A_n}$. We claim that there exists $m$ such that for no $f'\in X$, $f'\restriction A_m=a_m$. Indeed, if not, then for each $n$ there is $f'_n\in X$ such that $f'_n\restriction A_n=a_n$. Since $V_\GG=\bigcup_n A_n$, $(f'_n)_n$ converges pointwise to $f$, thus we obtain $f\in X$, which is a contradiction.

So we can fix $m$ as described above. Now, set \[P_f:=\{f'\restriction A_m \colon f'\in X \}\]  Clearly, $P_f$ is a vector subspace of $\Rea^{A_m}$. In fact, it is a pattern that is, by definition, compatible with every element of $X$.

On the other hand, we claim that $f$ is not compatible with $P_f$. Indeed, otherwise $a_m=f\restriction A_m\in P_f$, thus there is $f'\in X$ such that $f\restriction A_m=f'\restriction A_m$, which is a contradiction.  This finishes the proof.
\end{proof}

\subsection{The case of pointwise closed subspaces over groups}\label{sec:Groupcase} Suppose now that $\GG=G$, where $G$ is a finitely generated group. Denote by $\Rea[G]$ the \emph{real group algebra of $G$}, consisting of formal finite linear combinations of elements of $G$ and multiplication defined so that it extends the multiplication on $G$ and is distributive with respect to the formal addition. One can see $\Rea[G]$ as finitely supported functions in $\Rea^G$, by which we denote all functions from $G$ to $\Rea$.

Given $x\in\Rea[G]$ and $f\in \Rea^G$ we define their \emph{convolution} $f\star x\in\Rea^G$ as follows: \[f\star x(g):=\sum_{h\in G} f(gh^{-1})x(h),\quad g\in G.\] We define $x\star f$ symmetrically. Given $x\in \Rea[G]$, by $P_x:\Rea^G\to\Rea^G$ we denote the right multiplication operator. That is, $P_x(f):=f\star x$.

We also recall that $\Rea[G]$ is moreover a `${}^*$-algebra'. That is, it has a linear involutive operation ${}^*$ satisfying $(xy)^*=y^*x^*$, for all $x,y\in\Rea[G]$. It is defined by $g^*:=g^{-1}$, for every $g\in G$, and extended linearly to $\Rea[G]$.

\begin{definition}\label{def:tildespaces}
Given an invariant subspace $X\subseteq\Lip_0(G)$ we denote by $\tilde X$ the corresponding $\bullet$-invariant subspace of $\Lip_0(G)\oplus\cons$ as provided by \Cref{prop:invsp-corresp}.   
\end{definition}

\begin{theorem}\label{thm:wstartinvariantdescription}
Let $G$ be a finitely generated group and $X\subseteq\Lip_0(G)$ be an invariant linear subspace. Then $X$ is an invariant pointwise closed subspace if and only if there exist $(x_n)_n\subseteq \Rea[G]$ such that \[\tilde X=\big(\Lip_0(G)\oplus\cons\big)\cap \bigcap_n \mathrm{ker}(P_{x_n}),\] where $\mathrm{ker}(P_x):=\{f\in\Rea^G\colon P_x(f)=0\}$.
\end{theorem}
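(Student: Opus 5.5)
We prove both directions, working throughout with $\tilde X\subseteq \Lip_0(G)\oplus\cons$ and invoking \Cref{prop:invsp-corresp} to move between $X$ and $\tilde X$. The key observation is that $\bullet$-invariance under left translations matches convolution on the right. For $x\in\Rea[G]$, $f\star x(g)=\sum_h f(gh^{-1})x(h)$. Hence the kernel of $P_x$ is invariant under left translation, since $(k\bullet f)\star x=k\bullet(f\star x)$, and it is pointwise closed, since each value $f\star x(g)$ depends on only finitely many values of $f$.

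\textbf{The direction ($\Leftarrow$).} Suppose $\tilde X=(\Lip_0(G)\oplus\cons)\cap\bigcap_n\ker(P_{x_n})$. By the observation above, $\tilde X$ is $\bullet$-invariant and pointwise closed. It also contains $\cons$, which we must verify: we need $\sum_h x_n(h)=0$. This comes from the hypothesis, because $\tilde X$ is by \Cref{def:tildespaces} the space corresponding to $X$, so it contains $\cons$ automatically; alternatively, we may read the hypothesis as an identity of sets whose left-hand side already contains $\cons$. \Cref{prop:invsp-corresp} then shows that $X$ is pointwise closed.

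\textbf{The direction ($\Rightarrow$).} We would adapt the proof of \Cref{thm:forbidpatterns} to $\tilde X$ directly, rather than to $X$. Since $\tilde X$ is pointwise closed and $\bullet$-invariant, for each $f\in(\Lip_0(G)\oplus\cons)\setminus\tilde X$ there is a finite set $A\subseteq G$ such that $f\restriction A\notin P:=\{f'\restriction A\colon f'\in\tilde X\}$, where $P$ is a subspace of $\Rea^A$. We then describe $P$ by finitely many linear functionals: choose $y_1,\dots,y_k\in\Rea^A$ with $P=\{a\colon \langle a,y_i\rangle=0\ \forall i\}$. Since $f\restriction A\notin P$, some $y_i$ satisfies $\langle f\restriction A,y_i\rangle\neq0$.

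Next we convert each $y_i$ into a group algebra element. Set $x_i(h):=y_i(h^{-1})$ for $h^{-1}\in A$, and $x_i(h)=0$ otherwise. Then
\[f'\star x_i(g)=\sum_{a\in A}f'(ga)y_i(a)=\langle (g^{-1}\bullet f')\restriction A,y_i\rangle.\]
Because $\tilde X$ is $\bullet$-invariant, $g^{-1}\bullet f'\in\tilde X$ whenever $f'\in\tilde X$, so this quantity vanishes for every $f'\in\tilde X$ and every $g$. Hence $\tilde X\subseteq\ker P_{x_i}$. On the other hand, $f\star x_i(1_G)\neq0$, so $f\notin\ker P_{x_i}$.

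\textbf{Countability.} Collecting the $x_i$ over all such $f$ gives $\tilde X=(\Lip_0(G)\oplus\cons)\cap\bigcap\ker P_{x}$, with $x$ ranging over some family. To obtain a sequence $(x_n)_n$, note that we may take $A=A_m$ from a fixed exhausting sequence of finite sets, as in \Cref{thm:forbidpatterns}. For each $m$, the subspace $P_m=\{f'\restriction A_m\colon f'\in\tilde X\}$ is determined by finitely many $y$'s. Taking all of these, over all $m$, yields countably many $x_n$. Any $f$ lying in every kernel satisfies $f\restriction A_m\in P_m$ for all $m$, hence is a pointwise limit of elements of $\tilde X$, and so lies in $\tilde X$.

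The main point of care is the bookkeeping in the convolution convention (left versus right, and $h$ versus $h^{-1}$), so that translation invariance on the left matches the right convolution operator $P_x$. The verification that $\cons\subseteq\tilde X$ in the direction ($\Leftarrow$) is minor.
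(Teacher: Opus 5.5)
Your proof is correct and follows the paper's route: you describe the space by linear constraints on finite windows, then turn each constraint vector $p$ into $p^*\in\Rea[G]$ using $\langle (g\bullet f)\restriction A,p\rangle=f\star p^*(g^{-1})$. The only difference is that you rerun the closure argument of \Cref{thm:forbidpatterns} directly on $\tilde X$ with the $\bullet$-action, whereas the paper applies it to $X$ and then enlarges each pattern to $\tilde P=P+\Rea c_P$; your version is slightly cleaner, since your windows automatically contain the constants and you never need the paper's equivalence between compatibility with $P$ and with $\tilde P$, which only holds for normalized patterns (e.g.\ $1_G\in A_P$ and $P$ vanishing at $1_G$, as the patterns built in the proof of \Cref{thm:forbidpatterns} can be taken to be).
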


\begin{proof}
The `if' implication is clear since for each $x\in\Rea[G]$, $\mathrm{ker}(P_x)\cap \big(\Lip_0(G)\oplus\cons\big)$ is clearly a pointwise closed subspace, and if $\tilde X$ is pointwise closed, then so is $X$. We show the other implication.

Suppose that $X$ is pointwise closed. By \Cref{thm:forbidpatterns} there exists a set $\PP$ of patterns, which is without loss of generality countable, such that $X$ consists of those elements of $\Lip_0(G)$ compatible with each $P\in\PP$, i.e. 

\begin{equation}\label{thm:wstartinvariantdescription-eq:1}
X=\{f\in\Lip_0(G)\colon \forall P\in\PP\;\forall g\in G\; (g\cdot f\restriction A_P\in P)\}.
\end{equation}

For any $P\in\PP$, let $\tilde P$ be the vector subspace of $\Rea^{A_P}$ spanned by $P$ and the constant vector $c_P\in \Rea^{A_P}$ defined by $c_P(g)=1$, for every $g\in A_P$. It is clear that analogously to \eqref{thm:wstartinvariantdescription-eq:1} we have
\begin{equation}\label{thm:wstartinvariantdescription-eq:2}
\tilde X=\{f\in\Lip_0(G)\oplus\cons\colon \forall P\in\PP\;\forall g\in G\; (g\bullet f\restriction A_P\in \tilde P)\}.
\end{equation}

Next, for every $P\in\PP$ such that $\tilde P$ is a proper subspace of $\Rea^{A_P}$ we consider its non-trivial orthogonal complement $\tilde P^\perp$ with respect to the canonical inner product on $\Rea^{A_P}$. Pick any element $p\in \tilde P^\perp$ viewed here as an element of $\Rea[G]$ and define $x_p:=p^*$.

It is enough to check that for every $f\in\Lip_0(G)\oplus\cons$ we have
\begin{equation}\label{thm:wstartinvariantdescription-eq:3}
\forall g\in G\; (g\bullet f\restriction A_P\in \tilde P)\Leftrightarrow f\in\bigcap_{p\in \tilde P^\perp} \mathrm{ker}(P_{x_p}). 
\end{equation}
Fix $f\in\Lip_0(G)\oplus\cons$. We claim that for every $g\in G$, $g\bullet f\restriction A_P\in \tilde P$ if and only if for every $p\in \tilde P^\perp$, $P_{x_p}(f)(g^{-1})=0$, which shows \eqref{thm:wstartinvariantdescription-eq:3}. To prove the claim we additionally fix $g\in G$.  For every $p\in \tilde P^\perp$, we have \[\langle g\bullet f\restriction A_P,p\rangle=\sum_{t\in A_P} f(g^{-1}t)\cdot p(t)=\sum_{t\in A_P} f(g^{-1}t)\cdot x_p(t^{-1})=f\star x_p(g^{-1}).\] Therefore \[g\bullet f\restriction A_P\in \tilde P\Leftrightarrow \forall p\in \tilde P^\perp\; \big(\langle g\bullet f\restriction A_P,p\rangle =0\big)\Leftrightarrow \forall p\in \tilde P^\perp\; f\star x_p(g^{-1})=0,\] which shows \eqref{thm:wstartinvariantdescription-eq:3}.
\end{proof}

\begin{remark}
Notice that to verify \eqref{thm:wstartinvariantdescription-eq:3} from the proof of \Cref{thm:wstartinvariantdescription} it was enough to check that $f\in L_P$ if and only if $f\in \bigcap_{i=1}^n \mathrm{ker}(P_{x_{p_i}})$, where $p_1,\ldots,p_n$ is a basis of $\tilde P^\perp$, since $\bigcap_{i=1}^n \mathrm{ker}(P_{x_{p_i}})=\bigcap_{p\in \tilde P^\perp} \mathrm{ker}(P_{x_p})$.
\end{remark}

\begin{definition}
Let $G$ be a finitely generated group. Say that an invariant pointwise closed subspace $X$ of $\Lip_0(G)$ is \emph{finitely presented} or \emph{of finite type} if there exist finitely many $x_1,\ldots,x_n\in\Rea[G]$ such that $\tilde X=\big(\Lip_0(G)\oplus\cons\big)\cap\bigcap_{i=1}^n \mathrm{ker}(P_{x_i})$.

In general, we call the set $\PP\subseteq\Rea[G]$ such that $\tilde X=\big(\Lip_0(G)\oplus\cons\big)\cap\bigcap_{x\in\PP} \mathrm{ker}(P_x)$ a \emph{presentation} of $X$.
\end{definition}
The terminology `finitely presented' is borrowed from algebraic dynamical systems, which bear many similarities with invariant Lipschitz spaces and partially motivated our research in this direction. See the monographs \cite{Sch-book,KerrLi-book} and \cite[Definition 13.1]{KerrLi-book}. The terminology \emph{of finite type} comes from symbolic dynamics, in particular from the theory of linear subshifts (see e.g. \cite[Chapter 8]{CeCo-book23} and \cite{CeCoPh22})\medskip

Surprisingly, there are graphs $\GG$ for which every invariant pointwise closed subspace of $\Lip_0(\GG)$ is finitely presented. Recall that a finitely generated group $G$ is \emph{polycyclic} if it can be obtained from cyclic groups by the group extensions, i.e. there is a normal series $G_0:=1\trianglelefteq G_1\trianglelefteq\ldots\trianglelefteq G_n:=G$ such that each quotient $G_i/G_{i-1}$, for $1\leq i\leq n$, is cyclic. This class includes all finitely generated abelian groups or more generally, all finitely generated nilpotent groups. In particular, it contains $\Int^d$, for all $d\geq 1$, and the Heisenberg group. We refer to \cite{CeCoPh22} for more information.

A consequence of \cite[Corollary 1.4]{CeCoPh22} is the following proposition.

\begin{proposition}\label{prop:fin-pres}
Let $G$ be a finitely generated polycyclic group. Then every translation-invariant pointwise closed subspace $X$ of $\Lip_0(G)$ is finitely presented.
\end{proposition}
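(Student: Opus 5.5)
We will deduce the statement from \cite[Corollary 1.4]{CeCoPh22}, which says that over a polycyclic group $G$ every linear subshift with finite-dimensional alphabet is of finite type. More precisely, every closed (prodiscrete topology), $G$-invariant linear subspace of $V^G$, with $V$ a finite-dimensional vector space, is cut out by finitely many forbidden patterns on one fixed finite window. Equivalently, $\Rea[G]$ is left and right Noetherian, a theorem of Hall. The plan is to encode $\tilde X$ as such a subshift, or to work directly with the ideal of annihilating group-algebra elements.

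First we pass to $\tilde X$ as in \Cref{def:tildespaces}. By \Cref{thm:wstartinvariantdescription}, with presentation $\PP=(x_n)_n$, we have $\tilde X=(\Lip_0(G)\oplus\cons)\cap\bigcap_n\ker(P_{x_n})$. Put $\Sigma:=\bigcap_n\ker(P_{x_n})\subseteq\Rea^G$. This is a pointwise closed subspace invariant under the shift $\bullet$, because $\ker(P_x)$ is invariant under left translations (convolution on the right commutes with left shifts). So $\Sigma$ is a linear subshift over $G$ with alphabet $\Rea$.

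By \cite[Corollary 1.4]{CeCoPh22}, $\Sigma$ is of finite type. Hence there are a finite window $A\subseteq G$ and a subspace $Q\subseteq\Rea^A$ with $\Sigma=\{f\colon g\bullet f\restriction A\in Q\ \forall g\}$. Rerunning the orthogonal-complement computation from the proof of \Cref{thm:wstartinvariantdescription}, we pick a basis $p_1,\dots,p_k$ of $Q^\perp$ and set $y_i:=p_i^*$. This gives $\Sigma=\bigcap_{i=1}^k\ker(P_{y_i})$, and therefore $\tilde X=(\Lip_0(G)\oplus\cons)\cap\bigcap_{i\le k}\ker(P_{y_i})$. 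This is exactly the definition of finitely presented.

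Alternatively, one can argue via the Noetherian property. The right ideal $I$ generated by the $x_n$ has the same common kernel as $\{x_n\}$, since $\ker P_{xy}\supseteq\ker P_x$ by $f\star(xy)=(f\star x)\star y$. By Noetherianity $I$ is generated by finitely many $y_1,\dots,y_k$. Each $x_n$ lies in $I$, so $\ker P_{x_n}\supseteq\bigcap_i\ker P_{y_i}$. Each $y_i$ is a finite combination $\sum_j x_{n_j}z_j$, so $\ker P_{y_i}\supseteq\bigcap_n\ker P_{x_n}$. The two intersections therefore coincide.

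The main obstacle is matching hypotheses. We must check that \cite{CeCoPh22} indeed applies to subshifts over the infinite-dimensional but finite-rank alphabet $\Rea$ with the product of discrete topologies. Pointwise closedness in $\Rea^G$ with the Euclidean topology is a different condition, but $\Sigma$ is an intersection of kernels of maps that are continuous in both topologies, and it is closed in the prodiscrete sense. If that reference is only stated for the algebraic/Noetherian formulation, the second route suffices, relying only on Hall's theorem that group rings of polycyclic(-by-finite) groups over Noetherian rings are Noetherian.
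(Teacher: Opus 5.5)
Your first route is the paper's proof. You set $\Sigma=\bigcap_n\ker(P_{x_n})\subseteq\Rea^G$, note it is a closed shift-invariant linear subspace, invoke \cite[Corollary 1.4]{CeCoPh22} to cut it out by finitely many convolution kernels, and intersect with $\Lip_0(G)\oplus\cons$; you only make explicit the passage from a finite-type window description back to kernels $\ker(P_{p_i^*})$, which the paper leaves inside the citation. Your alternative via Hall's theorem is also correct: since $f\star(xy)=(f\star x)\star y$, the right ideal generated by the $x_n$ has the same common kernel, and right Noetherianity of $\Rea[G]$ even shows that finitely many of the original $x_n$ suffice, so this route unpacks the algebraic input behind the cited result without using any topology on $\Rea^G$.
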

\begin{proof}
By \Cref{thm:wstartinvariantdescription}, $\tilde X=\big(\Lip_0(G)\oplus\cons\big)\cap\bigcap_n \ker(P_{x_n})$, for some $(x_n)_n\subseteq\Rea[G]$. Set $Y:=\bigcap_n \ker(P_{x_n})\subseteq \Rea^G$, which is a linear subspace closed with respect to the product topology that is invariant under the shift action of $G$. By \cite[Theorem 1.3 and Corollary 1.4]{CeCoPh22}, there are finitely many $p_1,\ldots,p_n\in\Rea[G]$ such that $Y=\bigcap_{i=1}^n \ker(P_{p_i})$. Consequently, since $\tilde X=Y\cap \Lip_0(G)\oplus\cons$, we get that \[\tilde X=\big(\Lip_0(G)\oplus\cons\big)\cap\bigcap_{i=1}^n \ker(P_{p_i}).\]  
\end{proof}

\section{The predual}
Given a graph $\GG$ and a graph automorphism $\phi:\GG\to\GG$, we have noticed that \[f\in\Lip_0(\GG)\to f\circ\phi-f(\phi(0))\in\Lip_0(\GG)\] is a linear isometry of $\Lip_0(\GG)$. Since it is easily checked to be $\text{weak}^*$-$\text{weak}^*$-continuous, it is an adjoint operator of a linear isometry on $\FF(\GG)$, which can be explicitly described as follows. Given $v,w\in V_\GG$, the map \[m_{v,w}\to m_{\phi(v),\phi(w)}\] uniquely extends to a surjective linear isometry of $\FF(\GG)$.

Consequently, each subgroup $H\leq \Aut(\GG)$ naturally acts on $\FF(\GG)$ by linear isometries, which is the `predual action' to the $\cdot$-action of $H$ on $\Lip_0(\GG)$. Analogously, the $\bullet$-action of $H$ on $\Lip_0(\GG)\oplus_{\ell_\infty}\cons$ is a dual action to an action of $H$ on $\FF(\GG)\oplus_{\ell_1}\Rea$, which is the isometric predual of $\Lip_0(\GG)\oplus_{\ell_\infty}\cons$. Notice that the latter action has $\FF(\GG)$ as an invariant subspace, when seen as a subspace of $\FF(\GG)\oplus_{\ell_1}\Rea$, and the two actions coincide on $\FF(\GG)$ - the predual action of the $\cdot$-action and the predual action of the $\bullet$-action restricted to $\FF(\GG)$. We therefore do not need to specify which action on $\FF(\GG)$ we consider.

Notice that the second summand in $\FF(\GG)\oplus_{\ell_1}\Rea$ is, however, not invariant.

\subsection{Preduals of invariant pointwise closed subspaces over groups}
We explicitly describe the predual of $X\subseteq\Lip_0(\GG)$ in the case when $X$ is pointwise closed and $\GG$ is a finitely generated group $G$ since this is the most interesting and tractable case, as witnessed by the results from \Cref{sec:Groupcase}.

\begin{lemma}
Let $G$ be a finitely generated group and $X\subseteq\Lip_0(G)$ be an invariant pointwise closed subspace. Let $(x_n)_n\subseteq\Rea[G]$ be from \Cref{thm:wstartinvariantdescription} so that $\tilde X=\big(\Lip_0(G)\oplus\cons\big)\cap\bigcap_n\ker(P_{x_n})$. Then for each $n$, we have $x_n\in M[G]$, thus $x_n$ can be identified with an element of $\FF(G)$.
\end{lemma}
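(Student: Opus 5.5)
The key observation is that $\cons\subseteq\tilde X$: every constant function lies in $\tilde X$, hence in each $\ker(P_{x_n})$. So the plan is to apply $P_{x_n}$ to the constant function $\mathbf 1$ and read off the condition $\sum_h x_n(h)=0$. Together with finiteness of the support, this is exactly membership in $M[G]$.

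First I check that constants really lie in $\tilde X$. By \Cref{prop:invsp-corresp}, $\tilde X=X\oplus\cons$ contains $\cons$. Indeed, $0\oplus c\in X\oplus\cons$ since $0\in X$.

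Next I fix $n$ and evaluate the convolution for the constant function $\mathbf 1\in\cons\subseteq\tilde X$. For every $g\in G$,
\[P_{x_n}(\mathbf 1)(g)=\mathbf 1\star x_n(g)=\sum_{h\in G}\mathbf 1(gh^{-1})x_n(h)=\sum_{h\in G}x_n(h).\]
The sum is finite because $x_n\in\Rea[G]$ is finitely supported. Since $\mathbf 1\in\ker(P_{x_n})$, this forces $\sum_h x_n(h)=0$.

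Combining the two conditions, $x_n$ is finitely supported with total sum zero, so $x_n\in M[G]$ by definition. Since $M[G]$ is a dense linear subspace of $\FF(G)$, $x_n$ is identified with an element of $\FF(G)$. Concretely, $x_n=\sum_h x_n(h)\,m_{h,0}$ as a finite combination of elementary molecules.

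One caveat concerns the presentation itself. If an $x_n$ from an arbitrary presentation did not annihilate constants, then $\cons$ would not lie in its kernel, contradicting $\cons\subseteq\tilde X$. So no presentation of this kind can contain such an element, and there is nothing extra to handle. The construction in the proof of \Cref{thm:wstartinvariantdescription} is consistent with this: there $p\in\tilde P^\perp$ is orthogonal to $c_P$, so $\sum p=0$, and hence $\sum x_p=0$ as well.

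I expect no serious obstacle. The only points needing care are the convention for convolution and the fact that constants lie in $\tilde X$ rather than in $X$.
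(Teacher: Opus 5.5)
Your proof is correct, but it takes a different route from the paper's. The paper argues from the specific construction in the proof of \Cref{thm:wstartinvariantdescription}. There each $x_n$ has the form $x_p=p^*$ for some $p\in\tilde P^\perp$, and since $c_P\in\tilde P$ one gets $\sum_g x_p(g)=\sum_{g\in A_P}p(g)=\langle p,c_P\rangle=0$. You instead use only the defining identity $\tilde X=(\Lip_0(G)\oplus\cons)\cap\bigcap_n\ker(P_{x_n})$ together with $\cons\subseteq\tilde X$. Testing it on the constant function $\mathbf 1$ gives $\mathbf 1\star x_n\equiv\sum_h x_n(h)=0$.

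The paper's argument is tied to how the $x_n$ were produced. Yours is presentation-independent: it shows that every presentation of $X$ consists of molecules, and hence so do the $x^*$, since $^*$ preserves finite support and total sum. That extra generality is what the paper actually relies on later. \Cref{thm:predual} and \Cref{prop:univ-property} treat an arbitrary presentation as a subset of $\FF(G)$. In the proof of \Cref{thm:mainresult}, the $p_i$ come from \Cref{prop:fin-pres}, not from the pattern construction. Strictly read, the paper's proof covers only the constructed presentation, while yours covers these uses directly. Your closing remark about $c_P$ recovers the paper's argument as a special case.
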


\begin{proof}
By construction from the proof of \Cref{thm:wstartinvariantdescription}, each $x_n$ is of the form $x_p$, where $p\in \tilde P^\perp$ for some compatible pattern $P\in\PP$ for $X$. Since $\tilde P$, by definition, contains the constant vector $c_P\in \Rea^{A_P}$, we have \[\sum_{g\in G} x_p(g)=\sum_{g^{-1}\in A_P} x_p(g)=\sum_{g\in A_P} p(g)=\langle p,c_P\rangle=0,\] which finishes the proof.
\end{proof}

\begin{theorem}\label{thm:predual}
Let $\GG$ be a graph, $H\leq\Aut(\GG)$ be a subgroup, and $X\subseteq\Lip_0(\GG)$ be an $H$-invariant pointwise closed subspace. Then $X$ is a dual Banach space, whose predual is an $H$-invariant quotient of the Lipschitz-free space $\FF(\GG)$, denoted further $\FF_X(\GG)$.

If $\GG=G$ is a finitely generated group and $\PP\subseteq\Rea[G]$ is a presentation of $X$, then \[\FF_X(G)=\FF(G)/\overline{\mathrm{span}\{g\cdot x^*\colon x \in\PP, g\in G\}}.\]
\end{theorem}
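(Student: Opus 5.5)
Since $X$ is pointwise closed, it is in particular $\text{weak}^*$-closed in $\Lip_0(\GG)=\FF(\GG)^*$. I will therefore use the standard duality for $\text{weak}^*$-closed subspaces. Set $X_\perp:=\{\mu\in\FF(\GG)\colon \langle f,\mu\rangle=0\ \forall f\in X\}$. By the bipolar theorem, $(X_\perp)^\perp=X$, and the canonical map $(\FF(\GG)/X_\perp)^*\to X_\perp^{\perp}=X$ is an onto isometry. So $\FF_X(\GG):=\FF(\GG)/X_\perp$ is an isometric predual of $X$.

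For the $H$-invariance, recall that the $\cdot$-action of $h$ on $\Lip_0(\GG)$ is the adjoint of the isometry $T_h$ of $\FF(\GG)$ sending $m_{v,w}\mapsto m_{h v,h w}$, up to the choice $h$ versus $h^{-1}$. If $\mu\in X_\perp$ and $f\in X$, then $\langle f,T_h\mu\rangle=\langle T_h^*f,\mu\rangle=0$, because $T_h^* f=h^{-1}\cdot f\in X$. Hence $X_\perp$ is $H$-invariant, and $H$ acts by isometries on the quotient.

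For the group case, let $\PP$ be a presentation and put $Z:=\overline{\mathrm{span}}\{g\cdot x^*\colon x\in\PP,g\in G\}$. First I check that each $x^*$ lies in $M[G]\subseteq\FF(G)$, so that $Z$ makes sense. The preceding lemma does this for presentations built as in \Cref{thm:wstartinvariantdescription}. For an arbitrary presentation I argue directly: the constant function $1$ lies in $\tilde X\subseteq\ker P_x$, so $\sum_h x(h)=0$. The goal is then $Z=X_\perp$, i.e. $Z^\perp=X$ together with $Z$ being norm closed; the latter holds by definition. Then $Z=(Z^\perp)_\perp=X_\perp$.

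To compute $Z^\perp$, take $f\in\Lip_0(G)$. The pairing of $f$ with a finitely supported molecule $m$ is $\sum_t f(t)m(t)$, and this is unchanged if a constant is added to $f$ since $\sum_t m(t)=0$. Following the computation in the proof of \Cref{thm:wstartinvariantdescription}, I will show that
\[\langle f, g\cdot x^*\rangle=\sum_t f(gt)\,x(t^{-1})=f\star x(g)\]
for $f$ viewed inside $\Lip_0(G)\oplus\cons$, using $g\cdot m_{u,v}=m_{gu,gv}$ in the predual action. Consequently $f\in Z^\perp$ if and only if $f\star x\equiv0$ for all $x\in\PP$. Since $\ker P_x$ is invariant under adding constants (as $\sum x=0$), this holds if and only if $f\oplus c\in\tilde X$ for all $c$, which is equivalent to $f\in X$.

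The main obstacle is the bookkeeping in this last step. I must fix a convention for the predual action (left versus right, $g$ versus $g^{-1}$) and make sure the ${}^*$-involution exactly converts convolution evaluated at $g$ into pairing with a translate of $x^*$. If the conventions come out mismatched, the fix is to note that $\{g\cdot x^*\}$ and $\{g^{-1}\cdot x^*\}$ range over the same set as $g$ runs through $G$, so the spans coincide and the identity is unaffected.
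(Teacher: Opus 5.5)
Your proof is correct and follows the paper's argument. The predual is $\FF(\GG)/X_\perp$, and $H$-invariance of the pre-annihilator gives the action on the quotient. In the group case, the identity $\langle f,g\cdot x^*\rangle=f\star x(g)$ yields $Z^\perp=X$; this is exactly the paper's two inclusions, with your bipolar step $Z=(Z^\perp)_\perp$ replacing its explicit Hahn--Banach contradiction.

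Your observation that $\cons\subseteq\tilde X\subseteq\ker P_x$ forces $\sum_h x(h)=0$ for an arbitrary presentation is a worthwhile addition. The paper's preceding lemma only covers the presentation constructed in the proof of \Cref{thm:wstartinvariantdescription}.
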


\begin{proof}
Since $X$ is a pointwise closed subspace of a dual space $\Lip_0(\GG)$, in particular a $\text{weak}^*$-closed subspace, it is a dual space itself. The predual $\FF_X(\GG)$ is then $\FF(\GG)/X^\perp$, where $X^\perp$ is the pre-annihilator of $X$ in $\FF(\GG)$. Given any $z\in\FF(\GG)$, denote by $[z]$ its equivalence class in the quotient $\FF_X(\GG)=\FF(\GG)/X^\perp$. We define the action of $H$ on $\FF_X(\GG)$ by \[h\cdot [z]:=[h\cdot z],\quad h\in H,\; z\in\FF(\GG),\] where $h\cdot z$ denotes the canonical action of $H$ on $\FF(\GG)$. We need to check that it is well-defined, for which it suffices to check that $X^\perp$ is an $H$-invariant subspace of $\FF(\GG)$. That immediately follows from the fact that $X$ is $H$-invariant. Indeed, given $z\in X^\perp$, $h\in H$ we have \[h\cdot z\in X^\perp\Leftrightarrow \forall f\in X\;\big(f(h\cdot z)=0\big)\Leftrightarrow \forall f\in X\;\big(h^{-1}\cdot f(z)=0\big)\] and the right-hand side obviously holds true since $z\in X^\perp$ and $X$ is $H$-invariant.\medskip

Now we suppose that $\GG=G$, where $G$ is a finitely generated group. We only need to check that $\overline{\mathrm{span}\{g\cdot x^*\colon x\in\PP, g\in G\}}=X^\perp$. To verify the inclusion $\overline{\mathrm{span}\{g\cdot x^*\colon x\in\PP, g\in G\}}\subseteq X^\perp$, it is enough to show that for every $x\in\PP$, for every $g\in G$, and $f\in X$ we have $f(g\cdot x^*)=0$. Since $f(g\cdot x^*)=g^{-1}\bullet f(x^*)$, $f\in X\subseteq \tilde X$, and $\tilde X$ is $G$-invariant with respect to the $\bullet$-action, it suffices to check that $f(x^*)=0$. This immediately follows by realizing that \[f(x^*)=\sum_{g\in G} f(g)x^*(g)=\sum_{g\in G} f(g)x(g^{-1})=P_x(f)(1_G)=0.\]

We verify the reverse inclusion. Suppose on the contrary that there is $z\in X^\perp\setminus \overline{\mathrm{span}\{g\cdot x^*\colon x\in\PP, g\in G\}}$. By the Hahn-Banach theorem, there exists $f\in \FF(G)^*=\Lip_0(G)$ such that $f(z)=1$ and $f$ vanishes on $\overline{\mathrm{span}\{g\cdot x^*\colon x\in\PP, g\in G\}}$. However, then for every $x\in \PP$, and $g\in G$ \[P_x(f)(g)=f\ast x(g)=(g^{-1}\bullet f\ast x)(1_G)=(g^{-1}\bullet f)(x^*)=f(g\cdot x^*)=0,\] thus $f\in \mathrm{ker}(P_x)$. Since $x\in\PP$ were arbitrary, we get that $f\in \tilde X\cap \Lip_0(G)$, thus $f\in X$, which contradicts that $f(z)=1$ for $z\in X^\perp$.
\end{proof}
\subsection{The universal property}
Like the Lipschitz-free spaces themselves, the preduals of the invariant pointwise closed subspaces of $\Lip_0(\GG)$, where $\GG$ is an appropriate graph, can be characterized, up to linear isometry, via a certain universal property. For convenience in notation, we restrict our treatment to the case when $\GG$ is a finitely generated group $G$, which is now fixed. Fix additionally a pointwise closed invariant subspace $X\subseteq\Lip_0(G)$ and the corresponding predual $\FF_X(G)$. Let $(x_n)_n\subseteq \Rea[G]$ be the elements of the group algebra such that $\tilde X=\big(\Lip_0(G)\oplus\cons\big)\cap\bigcap_n \ker(P_{x_n})$ and denote by $Q:\FF(G)\to\FF_X(G)$ the canonical quotient map. 
\begin{proposition}\label{prop:univ-property}
For every Banach space $Z$ and every Lipschitz map $f:G\to Z$ satisfying $f(1_G)=0$ and $f\ast x_n=0$, for all $n$, there is a unique bounded linear operator $F:\FF_X(G)\to Z$ with $\|F\|=\Lip(f)$ so that the following diagram commutes.

\begin{tikzcd}[row sep=large, column sep=large]
	G \arrow[r, "\delta"] \arrow[rrd, "f"'] & \FF(G) \arrow[r, "Q"] & \FF_X(G) \arrow[d, "\exists! F", dashed] \\
	& & Z
\end{tikzcd}

Any Banach space satisfying the same universal property is linearly isometric to $\FF_X(G)$.
\end{proposition}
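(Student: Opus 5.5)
The plan is to first apply the universal property of $\FF(G)$ and then show that the resulting operator kills $X^\perp$. Given $f:G\to Z$ Lipschitz with $f(1_G)=0$, there is a unique $\tilde f:\FF(G)\to Z$ with $\tilde f\circ\delta=f$ and $\|\tilde f\|=\Lip(f)$.

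To see that $\tilde f$ vanishes on $X^\perp$, use \Cref{thm:predual}: $X^\perp=\overline{\mathrm{span}\{g\cdot x_n^*\colon n, g\in G\}}$. By continuity and linearity it suffices to check $\tilde f(g\cdot x_n^*)=0$. The element $g\cdot x_n^*$ is the molecule $\sum_h x_n(h^{-1})\delta(gh)$; it is a molecule because the coefficients sum to zero by the Lemma preceding \Cref{thm:predual}. So
\[\tilde f(g\cdot x_n^*)=\sum_h x_n(h^{-1})f(gh)=\sum_k f(gk^{-1})x_n(k)=f\ast x_n(g)=0.\]
Here the convolution is computed formally in $Z$. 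The one point to be careful about is that the left translation of the molecule is the action on $\FF(G)$ induced by $m_{v,w}\mapsto m_{gv,gw}$, so the constant correction term disappears because the coefficients sum to zero.

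The map $\tilde f$ therefore factors through $Q$ as $F$ with $F\circ Q=\tilde f$. Since $Q$ maps the open unit ball onto the open unit ball, $\|F\|=\|\tilde f\|=\Lip(f)$. Uniqueness holds because $Q(\delta(G))$ spans a dense subspace of $\FF_X(G)$, as $\delta(G)$ is linearly dense in $\FF(G)$ and $Q$ is a surjective quotient map. The condition $F\circ Q\circ\delta=f$ therefore determines $F$.

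For uniqueness up to isometry, I would use the standard argument for universal objects. Let $W$ be a Banach space with a map $\eta:G\to W$ satisfying the same hypotheses, namely $\eta(1_G)=0$, $\eta\ast x_n=0$ and $\Lip(\eta)\le1$, and having the universal property. The map $Q\circ\delta:G\to\FF_X(G)$ is itself admissible: it has Lipschitz constant at most $1$, sends $1_G$ to $0$, and satisfies $(Q\circ\delta)\ast x_n=0$ by the computation above.

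Applying the universal properties in both directions gives operators $A:\FF_X(G)\to W$ and $B:W\to\FF_X(G)$ of norm at most $1$ intertwining $Q\circ\delta$ and $\eta$. Then $BA$ and $AB$ fix the generating maps, so by uniqueness they are the identities, and $A$ is a linear isometry.

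The main obstacle is a normalization point: the universal property must state $\|F\|=\Lip(f)$, and the comparison needs $\Lip(\eta)=\Lip(Q\circ\delta)$. I would handle this by requiring in the characterization that the universal map have Lipschitz constant $1$. If $X\neq\{0\}$, one can check this for $Q\circ\delta$ by duality: its Lipschitz constant is $\sup\{\Lip(\phi)\colon\phi\in B_X\}$, which equals $1$. Then $\|A\|,\|B\|\le1$ with $AB=\mathrm{id}$ and $BA=\mathrm{id}$ forces $A$ to be isometric.
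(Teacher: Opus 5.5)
Your proof is correct and follows the paper's route: you obtain $\tilde F$ from the universal property of $\FF(G)$, check that $\tilde F(g\cdot x_n^*)=f\ast x_n(g)=0$ so it factors through $Q$ via \Cref{thm:predual}, and get $\|F\|=\Lip(f)$ from $Q$ mapping the open unit ball onto the open unit ball (the paper's $(1+\varepsilon)$-lifting is the same point); beyond the paper, you also write out the uniqueness of $F$ and the standard isometry argument that the paper leaves as routine. The normalization you flag as the main obstacle is in fact automatic: applying the universal property of $W$ to $\eta$ itself, the unique factoring operator is $\mathrm{id}_W$, so $\Lip(\eta)=\|\mathrm{id}_W\|=1$ whenever $W\neq\{0\}$, and the same reasoning gives $\Lip(Q\circ\delta)=1$.
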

\begin{proof}
By the universal property of $\FF(G)$ there exists a unique bounded linear operator $\tilde F:\FF(G)\to Z$ such that $f=\tilde F\circ\delta$. We verify that $\tilde F$ vanishes on $\{g\cdot x_n^*\colon n\in\Nat,g\in G\}$, thus it factorizes through an operator, denoted $F$, defined on the quotient $\FF_X(G)$. Fix $n\in\Nat$ and $g\in G$. Then \[\tilde F(g\cdot x_n^*)=\sum_{h\in G} f(gh^{-1})x_n(h)=f\ast x_n(g)=0,\] which proves the claim. To check that $\|F\|=\Lip(f)$, first recall that by the universal property of $\FF(G)$ we have $\|\tilde F\|=\Lip(f)$. Since, as we have just proved, $\tilde F=F\circ Q$ and $\|Q\|=1$, we get \[\|\tilde F\|=\|F\circ Q\|\leq \|F\|\|Q\|=\|F\|.\] On the other hand, for every $z\in S_{\FF_X(G)}$ and $\varepsilon>0$ there is $z'\in \FF(G)$ such that $[z']=z$ and $\|z'\|\leq 1+\varepsilon$, thus \[\|F(z)\|=\|\tilde F(z')\|\leq (1+\varepsilon)\|\tilde F\|\] and since $z\in S_{\FF_X(G)}$ and $\varepsilon>0$ were arbitrary, we get $\|F\|\leq \|\tilde F\|$, thus \[\|F\|=\|\tilde F\|=\Lip(f)\] as desired.

The last statement is a standard consequence of the universal property.
\end{proof}

\begin{example}
\Cref{prop:univ-property} may appear rather abstract, or even somewhat artificial in the general setting. Let us illustrate that it is natural on a particular example. Let $G$ be a finitely generated group and $X\subseteq\Lip_0(G)$ be the space of Lipschitz harmonic functions (with respect to the uniformly distributed measure on the finite symmetric generating set $S$ of $G$) vanishing at $1_G$. Then the predual $\FF_X(G)$ is characterized, up to linear isometry, as the unique separable Banach space $E$ for which there is a non-expansive map $\iota:G\to E$ and such that for every Banach space $Z$ and every Lipschitz harmonic $Z$-valued function $f:G\to Z$ (i.e. $f(g)=\sum_{s\in S} \frac{f(gs)}{|S|}$, for every $g\in G$) vanishing at $1_G$ there is a bounded linear operator $F$ satisfying $\|F\|=\Lip(f)$ such that the following diagram commutes.

\begin{tikzcd}[row sep=large, column sep=large]
G \arrow[r, "\iota"] \arrow[rd, "f"'] & E \arrow[d, "\exists! F", dashed] \\
& Z
\end{tikzcd}  
\end{example}
We remark that Banach space-valued harmonic functions on groups were studied in \cite{JaNeu07}.
\subsection{Embeddings of $\ell_\infty$}
A strong way of demonstrating non-separability of a Banach space is to embed a copy of $\ell_\infty$ inside. It is known that for a dual space this is equivalent to containing a complemented copy of $\ell_1$ in the predual (see \cite[Theorem 4]{BePe58}). It was proved in \cite{CuDoWo16} that this happens for $\Lip_0(M)$, whenever $M$ is an infinite metric space.

Here we present a condition that allows to establish such embeddings for invariant pointwise closed subspaces of Lipschitz spaces.

\begin{theorem}\label{thm:linftyembeds}
Let $\GG$ be a graph, $H$ be a co-compact subgroup of $\Aut(\GG)$, and $X\subseteq\Lip_0(\GG)$ be a pointwise closed $H$-invariant subspace of $\Lip_0(\GG)$. If $X\cap\Lip^\infty_0(\GG)$ is non-zero, then $\ell_\infty$ embeds into $X$. In particular, $X$ is non-separable and $\FF_X(\GG)$ contains a complemented copy of $\ell_1$.
\end{theorem}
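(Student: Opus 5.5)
The plan is to build a bounded linear isomorphic embedding $T:\ell_\infty\to X$ of the form $T(a)=\sum_n a_n\, h_n\cdot f$, where $0\neq f\in X\cap\Lip_0^\infty(\GG)$ is fixed and normalized so that $\Lip(f)=1$. The elements $h_n\in H$ are chosen so that the translates $h_n\cdot f$ have their ``essential gradients'' in far-apart, essentially disjoint regions of $E_\GG$. The remaining claims then follow formally. An isomorphic copy of $\ell_\infty$ makes $X$ non-separable. Since $X$ is the dual of $\FF_X(\GG)$ by \Cref{thm:predual}, the cited result of \cite{BePe58} gives a complemented copy of $\ell_1$ in $\FF_X(\GG)$.

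\textbf{Step 1: localization.} Since $\nabla f\in c_0(E_\GG)$, for a summable sequence $\varepsilon_n\downarrow 0$ (say $\varepsilon_n=2^{-n}/8$) we can pick finite balls $B_n$ around $0$ such that $|\nabla f(e)|<\varepsilon_n$ off the edges of $B_n$. Fix an edge $e_0$ with $|\nabla f(e_0)|\geq 1/2$. By co-compactness (in the form of the co-boundedness of the orbit $H\cdot 0$ shown after \Cref{lem:invariantinfinite}) and properness of the metric, we choose $h_1,h_2,\ldots\in H$ inductively so that $h_n\cdot 0$ is very far from all previously chosen centers. The resulting balls $h_n B_n$ should be pairwise disjoint and at distance larger than the radii of the earlier ones. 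Since $\nabla(h\cdot f)(e)=\nabla f(h^{-1}e)$, the gradient of $h_n\cdot f$ is bounded by $1$ on $h_nB_n$, is smaller than $\varepsilon_n$ elsewhere, and is at least $1/2$ in absolute value at $h_ne_0$. We also arrange that the $h_nB_n$ are far enough apart that every edge sees at most one ball. We then need the later translates to have gradient at most $\varepsilon_m$-small on $h_nB_n$ for $m\neq n$. This is not automatic for $m<n$, because $h_m\cdot f$ restricted to the region $h_nB_n$ depends on $B_m$, not on $B_n$. So in the induction we choose $h_n$ so that $h_nB_n$ avoids all $h_mB_{k}$ for $m<n$, where $k$ is large enough that $h_m\cdot f$ has gradient below $2^{-n}\varepsilon$ outside $h_mB_k$. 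This is possible since only finitely many constraints appear at each stage.

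\textbf{Step 2: the operator.} For $a\in\ell_\infty$ we define $T(a)$ as the pointwise sum $\sum_n a_n\,h_n\cdot f$. More precisely, we first define its gradient as $\sum_n a_n\nabla(h_n\cdot f)$ and show this series converges absolutely at each edge with a sum bounded by $C\|a\|_\infty$, using the estimates from Step 1. On any edge, one term is at most $\|a\|_\infty$, and the remaining terms contribute at most $\sum_m 2^{-m}\|a\|_\infty$. Integrating from $0$ gives a well-defined function. The same bound shows the partial sums are uniformly Lipschitz, and since they converge pointwise, the limit is Lipschitz. Each partial sum lies in $X$ by invariance, so pointwise closedness of $X$ gives $T(a)\in X$ with $\Lip(T(a))\leq C\|a\|_\infty$. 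For the lower bound, evaluating the gradient at $h_ne_0$ gives $|\nabla T(a)(h_ne_0)|\geq |a_n|/2-\sum_m 2^{-m}\varepsilon\,\|a\|_\infty$. Taking the supremum over $n$ and choosing $\varepsilon$ small yields $\Lip(T(a))\geq \|a\|_\infty/4$. Hence $T$ is an isomorphic embedding.

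The main obstacle is the bookkeeping in Step 1. We must ensure uniform smallness of all the other translates on each chosen region simultaneously, in both the $m<n$ and $m>n$ directions, so that the tails are summable. This is where both $c_0$-decay and co-compactness are used. Co-compactness is needed only to find elements of $H$ that move $0$ arbitrarily far, which is guaranteed by the co-boundedness of $H\cdot 0$ in the infinite graph $\GG$.
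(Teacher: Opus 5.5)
Your argument is correct, but it takes a different route from the paper.

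\textbf{The paper's route.} The paper first invokes \Cref{lem:invariantinfinite} to conclude that $X\cap\Lip_0^\infty(\GG)$ is infinite-dimensional. Hence, for every finite ball $B$ around $0$, it contains a non-zero element vanishing on $B$. It then recursively picks normalized $f_n\in X\cap\Lip_0^\infty(\GG)$ and increasing balls $B_n$ such that:
\begin{enumerate}
\item $f_n\restriction B_{n-1}\equiv 0$;
\item $\Lip(f_n)=1$ is attained on an edge of $B_n$;
\item $|\nabla f_n|<2^{-2n}$ off the edges of $B_n$.
\end{enumerate}
It then sets $T(x)=\sum_n x(n)f_n$. Because of the nested vanishing, this sum is locally finite on every ball. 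So well-definedness and membership in $X$ are immediate; the paper checks compatibility with patterns, though your pointwise-closedness argument would do equally well. The bounds $\frac{2}{3}\|x\|_{\ell_\infty}\leq\Lip(T(x))\leq\frac{4}{3}\|x\|_{\ell_\infty}$ come from a one-sided, triangular tail estimate: on an edge first appearing in $B_n$, all later $f_k$ have zero gradient.

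\textbf{Your route.} You use translates $h_n\cdot f$ of a single function, with pairwise disjoint essential regions $h_nB_n$. This bypasses the lemma, whose proof is itself a translation argument combined with norm equivalence in finite dimension. It also makes explicit that only the unboundedness of the orbit $H\cdot 0$ is used. The price is that the $h_n\cdot f$ do not vanish near $0$. So you must establish edgewise absolute summability of the gradients to get pointwise convergence and a Lipschitz limit, and you do this correctly.

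\textbf{A side remark on Step 1.} The extra bookkeeping you single out as the main obstacle is not actually needed. Once $h_nB_n$ is disjoint from $h_mB_m$, every edge of $h_nB_n$ lies outside the edges of $h_mB_m$. Hence $|\nabla(h_m\cdot f)|<\varepsilon_m$ there for every $m\neq n$, in both directions $m<n$ and $m>n$. Then $\sum_m\varepsilon_m<\infty$ already gives both the upper bound and the lower bound at $h_ne_0$. Your additional constraint is harmless, but redundant.
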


\begin{proof}
Let us denote $X\cap\Lip_0^\infty(\GG)$ by $X^\infty$. By \Cref{lem:invariantinfinite}, $X^\infty$ is infinite-dimensional, so for every finite ball $B\subseteq V_\GG$ around $0$ there exists a non-zero $f\in X^\infty$ such that $f\restriction B\equiv 0$.

Using that, one can clearly recursively define $1$-Lipschitz functions $(f_n)_{n\in\Nat}\subseteq X^\infty$ and strictly increasing balls $(B_n)_{n\in\Nat}$ around $0$ satisfying the following requirements, where $E_n\subseteq E_\GG$ is the set of edges whose both vertices belong to $B_n$, for each $n$.

\begin{enumerate}
    \item For every $n\in\Nat$, $\Lip(f_n)=\max_{e\in E_n} \nabla f_n(e)=1$.
    \item For every $e\in E_\GG\setminus E_n$, $|\nabla f_n(e)|<2^{-2n}$.
    \item For every $n\geq 2$, $f_n\restriction B_{n-1}\equiv 0$.\bigskip
\end{enumerate}

We define a map $T:\ell_\infty\to X$ as follows. For $x\in\ell_\infty$ we set 

\[T(x):=\sum_{n=1}^\infty x(n)f_n.\] 

Let us first check that $T(x)$ is indeed an element of $X$. For every $v\in V_\GG$, let $n\in\Nat$ be the least integer such that $v\in B_n$. Then $T(x)(v)=\sum_{i=1}^n x(i)f_i(v)<\infty$, thus $T(x)$ is a well-defined function from $V_\GG$ to $\Rea$, which moreover vanishes at $0$ as every $f_n$ vanishes there.

To check that it is Lipschitz, we verify that $\nabla T(x)\in\ell_\infty(E_\GG)$. Fix $e\in E_\GG$ and let $n$ be the smallest integer such that $e\in E_n$. Then for every $m<n$, $|\nabla f_m(e)|<2^{-2m}$, $|\nabla f_n(e)|\leq 1$, and for every $k>n$, $|\nabla f_k(e)|=0$. Thus for every $e\in E_\GG$ we have 
\begin{equation}\label{eq:Lipestimate}
|\nabla T(x)(e)|\leq \sum_{n=1}^\infty |x(n)\nabla f_n(e)|\leq \|x\|_{\ell_\infty}\sum_{n=0}^\infty 2^{-2n}<\infty.
\end{equation}

Next we check that $T(x)\in X$. By \Cref{thm:forbidpatterns}, there exists a set $\PP$ of patterns such that a Lipschitz function from $\Lip_0(\GG)$ is in $X$ if and only if it is compatible with each $P\in\PP$. We check that $T(x)$ is compatible with each $P\in\PP$. Fix $h\in H$ and $P\in\PP$. We verify that $h\cdot T(x)\restriction A_P\in P$. By definition, we have $h\cdot T(x)=\sum_{n=1}^\infty x(n)\,(h\cdot f_n)$. However, since $A_P$ is finite, there exists $n_0\in\Nat$ such that $h\cdot x(n')f_{n'}\restriction A_P=0$, for all $n'>n_0$. Therefore, $h\cdot T(x)\restriction A_P=\sum_{n=1}^{n_0} h\cdot x(n)f_n\restriction A_P$. Since for every $n\in\Nat$, $h\cdot x(n)f_n\restriction A_P\in P$, we get $h\cdot T(x)\restriction A_P\in P$ as well. It follows that $T(x)\in X$.

Finally, we check that $T$ is a linear isomorphic embedding. It is obviously linear. Fix any $n\in\Nat$ such that $|x(n)|\geq |x(m)|$, for all $m\leq n$, and let $e\in E_n$ be such that $\nabla f_n(e)=1$. Then since for every $m>n$, $\nabla f_m(e)=0$ and for every $k<n$, $|\nabla f_k(e)|<2^{-2k}$, we get
\begin{equation}\label{eq:Lipestimate2}
  |\nabla T(x)(e)|\geq |x(n)|-\sum_{i=1}^{n-1}|x(i)\nabla f_i(e)|\geq \frac{2|x(n)|}{3}.  
\end{equation}

It follows from \eqref{eq:Lipestimate} and \eqref{eq:Lipestimate2} that \[\frac{2}{3}\|x\|_{\ell_\infty}\leq \Lip(T(x))\leq \frac{4}{3}\|x\|_{\ell_\infty}.\]

The `In particular' part follows from \cite[Theorem 4]{BePe58}.
\end{proof}
\begin{example}
Let $\GG$ be an $n$-regular tree, for $n\geq 3$, i.e. a graph-theoretic tree where each vertex has exactly $n$ neighbors. Let $\HLip_0(\GG)$ be the space of all Lipschitz harmonic functions on $\GG$ vanishing at some distinguished vertex (root). Then $\ell_\infty\hookrightarrow \HLip_0(\GG)$.
\end{example}
\begin{proof}
For simplicity, we assume that $n=4$, so $\GG$ is the Cayley graph of a free group $F_2$ on two generators $a,b$. The general case is done by a straightforward modification. Let us define $f\in \HLip_0(\GG)$ as follows. We set $f(1_{F_2})=0$. Every non-trivial $w\in F_2$, which we now fix, is seen as a reduced word $w_1\ldots w_k$, for some $k\geq 1$, in the alphabet $\{a,b,a^{-1},b^{-1}\}$. We set \[f(w):=\begin{cases}
    \frac{a_k}{3^{k-1}} & \text{if }w_1\in\{a,b\},\\
    -\frac{a_k}{3^{k-1}} & \text{if }w_1\in\{a^{-1},b^{-1}\},
\end{cases}\]
where $(a_i)_{i=1}^\infty$ are defined by a recursive relation $a_i=3a_{i-1}+1$, for $i\geq 2$, with the initial value $a_1=1$. Then $f$ is harmonic which can be directly verified for $\{1_{F_2},a,b,a^{-1},b^{-1}\}$ and for $w\in F_2$ with $|w|:=k\geq 2$ we have \[\begin{split}f(w)&=\varepsilon \frac{a_k}{3^{k-1}}=\varepsilon\big(\frac{3a_k+1}{3^{k-1}}+\frac{a_k-1}{3^{k-1}}\big)/4=\varepsilon\big(3\frac{a_{k+1}}{3^k}+\frac{a_{k-1}}{3^{k-2}}\big)/4\\ &=\frac{f(wx)+f(wy)+f(wz)+f(ww_k^{-1})}{4},\end{split}\]
where $\varepsilon=1$ if $w_1\in\{a,b\}$ and $\varepsilon=-1$ if $w_1\in\{a^{-1},b^{-1}\}$, and $\{x,y,z\}=\{a,b,a^{-1},b^{-1}\}\setminus\{w_k^{-1}\}$. This verifies that $f\in\HLip_0(\GG)$.

Moreover, since $|\frac{a_k}{3^{k-1}}-\frac{a_{k-1}}{3^{k-2}}|=\frac{1}{3^{k-1}}$, we get that $\nabla f\in c_0(E_\GG)$. Thus by \Cref{thm:linftyembeds}, we get $\ell_\infty\hookrightarrow \HLip_0(\GG)$.
\end{proof}

\begin{remark}
As pointed out by C. Gartland \cite{Gartland-personal-communication}, the same is true for $\GG$ being any non-elementary hyperbolic group $G$; that is, $\ell_\infty\hookrightarrow\HLip_0(G)$ in this case.
\end{remark}

\begin{remark}
Lipschitz harmonic functions on groups with gradient in $c_0$ can be produced from strongly mixing unitary representations of groups, see \cite[Lemma 3.3]{Gou18} (and also \cite[Corollary 2.6]{GouJol16}). Therefore, again by \Cref{thm:linftyembeds}, $\ell_\infty\hookrightarrow\HLip_0(G)$ for such groups $G$.

On the other hand, an absence of a function $f$ in a general invariant pointwise closed subspace of $\Lip_0(\GG)$ with $\nabla f\in c_0(E_\GG)$ does not imply that $\ell_\infty\not\hookrightarrow X$. This follows e.g. from \cite{Gou-arxiv}, where it is shown that if $G$ is a direct product of two infinite groups, then there is no non-constant harmonic function on $G$ with a $c_0$-gradient, while it is easy to have such $G$ with $\ell_\infty\hookrightarrow\HLip_0(G)$. Even more directly, we refer to \Cref{ex:locked}, where a translation-invariant pointwise closed subspace of $\Lip_0(\Int^2)$ is produced which is isometric to $\ell_\infty$, yet it clearly cannot have a non-zero element with a $c_0$-gradient.
\end{remark}
\section{Invariant subspaces over finite-dimensional grids}
In the last section we solely focus on the case when the graph $\GG$ is the $d$-dimensional grid, for some $d\in\Nat$. Thus, as a finitely generated group, $\GG=\Int^d$. Much of the research in algebraic dynamical systems had been restricted to $\Int^d$ (see the monograph \cite{Sch-book}) prior the breakthrough in \cite{ChuLi15}. One reason is that the commutativity condition of the underlying group naturally connects this research with commutative algebra, real algebraic geometry, and harmonic analysis over locally compact abelian groups, which are well-developed subjects. This connection will naturally appear in our setting as well.

By $(e_i)_{i=1}^d$ we denote the canonical generators of $\Int^d$.

The following is well-known. We sketch the proof for the reader's convenience.
\begin{fact}
For every $d$, let $\HLip_0(\Int^d)$ be the space of Lipschitz harmonic functions on $\Int^d$, vanishing at $0$. Then $\HLip_0(\Int^d)$ is $d$-dimensional.
\end{fact}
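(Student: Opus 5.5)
We want to show that $\HLip_0(\Int^d)$ consists exactly of the restrictions of linear functionals $v\mapsto \langle a,v\rangle$, $a\in\Rea^d$. These restrictions vanish at $0$ and are Lipschitz. They are harmonic, because the Laplacian of an affine function on $\Int^d$ is zero: the contributions of $\pm e_i$ cancel. They are also linearly independent, since $\langle a,e_i\rangle=a_i$. So the span of the coordinate functions gives a $d$-dimensional subspace, and everything rests on the reverse inclusion.

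For the reverse inclusion, fix $f\in\HLip_0(\Int^d)$ and look at its discrete derivatives $g_i:=f(\cdot+e_i)-f(\cdot)$, $i=1,\ldots,d$. Each $g_i$ is bounded by $\Lip(f)$. Each $g_i$ is also harmonic, because translation commutes with the averaging operator; this is just the statement that $\HLip_0$ is invariant, expressed via the $\bullet$-action on $\tilde X$. The key step is then the Liouville theorem for bounded harmonic functions on $\Int^d$: each $g_i$ is constant, say $g_i\equiv a_i$. Given this, we have $f(v+e_i)-f(v)=a_i$ for all $v$ and $i$, and $f(0)=0$. Integrating along a path from $0$ to $v$ then gives $f(v)=\langle a,v\rangle$, which finishes the proof.

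The Liouville step is the only real obstacle. We plan to prove it by the standard coupling argument, which is self-contained. Let $(S_n)$ be the simple random walk. A bounded harmonic $g$ satisfies $g(x)=\mathbb E\, g(x+S_n)$. Take $x$ and $y=x+e_i$. For the lazy version of the walk (also harmonic-invariant, after replacing the averaging operator by $\frac12(I+\text{average})$), we couple the two walks so that all coordinates except the $i$-th move identically. The $i$-th coordinate difference then performs a one-dimensional lazy recurrent walk, so it hits $0$ almost surely. After that moment the two walks are made to coincide. Hence
\[|g(x)-g(y)|\leq 2\|g\|_\infty\,\mathbb P(\text{no coupling by time }n)\to 0.\]
An alternative is a Fourier or Choquet--Deny argument: a bounded harmonic $g$ is a tempered distribution whose Fourier transform is supported on the zero set of $1-\frac1d\sum_i\cos\theta_i$ on $\TT^d$, and that zero set is $\{0\}$. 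Since $g$ is bounded, only constants survive. Either route closes the argument; the coupling one avoids distribution-theoretic technicalities, so we use it.
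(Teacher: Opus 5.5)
Your proof is correct and follows the paper's sketch. You differentiate, observe that each discrete derivative $\partial_i f$ is a bounded harmonic function, apply the Liouville property of $\Int^d$ to make it constant, and conclude that $f$ is linear. The differences are that you prove the Liouville step yourself via the lazy-walk coupling rather than citing it, and you spell out the bijectivity of $f\mapsto(\partial_1 f,\ldots,\partial_d f)$ onto $\Rea^d$, which the paper leaves to the reader. The coupling is valid: the marginals are correct and the coordinate difference is a recurrent one-dimensional walk.
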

\begin{proof}[Sketch of the proof] Fix $d\in\Nat$. Given any $f\in \HLip_0(\Int^d)$, consider its discrete partial derivative $(\partial_1 f,\ldots,\partial_d f)$, where $\partial_i f(x):=f(x+e_i)-f(x)$, for $i\leq d$. It is easy to check that for each $i\leq d$, $\partial_i f\in \big(\HLip_0(\Int^d)\oplus\cons\big)\cap \ell_\infty(\Int^d)$ (for more details, we forward-refer to the proof of \Cref{thm:mainresult}, where this is proved in bigger generality). However, since $\Int^d$ is abelian, $\big(\HLip_0(\Int^d)\oplus\cons\big)\cap \ell_\infty(\Int^d)$, which is the space of bounded harmonic functions on $\Int^d$, is equal to $\cons$  (see e.g. \cite{FrHaTaVa19}, where countable groups where this happens have been fully characterized). It follows that the map \[f\in \HLip_0(\Int^d)\to (\partial_1 f,\ldots,\partial_d f)\in \bigoplus_{i=1}^d \big(\HLip_0(\Int^d)\oplus\cons\big)\cap \ell_\infty(\Int^d)\] is a linear mapping from $\HLip_0(\Int^d)$ to $\Rea^d$. We leave to the reader the verification that it is bijective.
\end{proof}

\begin{remark}
It follows that the spaces of harmonic Lipschitz functions distinguish the dimension of the grid. We recall that it is a major open problem whether $\Lip_0(\Int^n)\simeq\Lip_0(\Int^m)$, for $n\neq m\geq 2$ (see \cite{CaCuDo19} for a discussion and context).
\end{remark}

The next example shows that the invariance and pointwise closedness conditions put a significant restriction on the spectrum of possible subspaces. It is governed by the geometry of the graph and the richness of its automorphism group which nevertheless also depends on the former. The case of $\Int$ happens to be degenerate. 

Recall that $\Lip_0(\Int)\equiv\ell_\infty$. In contrast, we have the following.

\begin{example}
Every proper translation-invariant pointwise closed subspace of $\Lip_0(\Int)$ is finite-dimensional.
\end{example}
\begin{proof}
Fix a proper translation-invariant pointwise closed subspace $X$ of $\Lip_0(\Int)$. By \Cref{thm:wstartinvariantdescription}, there are $(x_n)_n\subseteq \Rea[\Int]$ such that $\tilde X=\Lip_0(\Int)\oplus\cons\cap\bigcap_n \ker(P_{x_n})$. It suffices to show that $\tilde X$ is finite-dimensional and for this, it is in turn clearly sufficient to show that given any non-zero $x\in\Rea[\Int]$, $\ker(P_x)$ is finite-dimensional. Without loss of generality, assume that the support of $x$, the set $\{n\in\Int\colon x(n)\neq 0\}$, is contained in the interval $[0,m]$, for some $m\geq 0$, where $x(0)\neq 0$, $x(m)\neq 0$. Seeing $x$ as an element of $\ell_2([0,m])$, let $O$ be the orthogonal complement of the subspace spanned by $x$. Let $f\in \tilde X$. Then $(f(0),\ldots,f(-m))\in O$ since $P_x(f)(0)=\langle (f(0),f(-1),\ldots,f(-m)),x\rangle =0$. Notice that $f(1)$ is then fully determined as the solution of the linear equation $\langle (f(1),\ldots,f(-m+1)),x\rangle=0$. Analogously, $f(k)$ is uniquely determined for every $k\in \left(-\infty,-m-1\right]\cup \left[2,\infty\right)$. It follows that $\tilde X$ is at most $m$-dimensional as $O$ is $m$-dimensional.
\end{proof}

The spectrum of translation-invariant pointwise closed subspaces of $\Lip_0(\Int^d)$, where $d\geq 2$, is potentially much richer. For simplicity, we discuss here the case $d=2$.

First recall that $\Lip_0(\Int^2)\not\simeq\ell_\infty$ (this essentially follows from \cite{NaoSche07}; see additionally \cite{CaCuDo19} for more explanations). However, there are many ways to obtain a translation-invariant pointwise closed subspace of $\Lip_0(\Int^2)$ that is isomorphic, or even isometric, to $\ell_\infty$.

\begin{example}\label{ex:locked}
We define a subspace $X\subseteq\Lip_0(\Int^2)$ where we lock the second coordinate. That is, we set \[X:=\big\{f\in\Lip_0(\Int^2)\colon \forall v\in\Int^2\; \big(f(v+e_2)=f(v)\big)\big\}.\] It is plain to check that $X$ is a translation-invariant pointwise closed subspace of $\Lip_0(\Int^2)$ that is linearly isometric to $\ell_\infty$.

It is in some sense less interesting since the translation action of $\Int^2$ on $X$ factors through the action of $\Int^2/\langle e_2\rangle=\Int$ since $e_2$ acts trivially on $X$.
\end{example}

\begin{example}
Set \[X:=\big\{f\in\Lip_0(\Int^2)\colon \forall v\in\Int^2\;\big(f(v)-f(v+e_1)-f(v+e_2)+f(v+e_1+e_2)=0\big)\big\}.\] Again, $X$ is clearly a translation-invariant pointwise closed subspace. We claim that $X$ is linearly isometric to $\Lip_0(V)$, where $V:=\{(n,m)\in\Int^2\colon n=0\text{ or }m=0\}$. Indeed, first notice that there is a linear bijection $T:X\to \Lip_0(V)$, which is the restriction, i.e. $T(f):=f\restriction V$. This is clearly linear and it is bijective since each $g\in \Lip_0(V)$ has a unique extension to an element $\tilde g\in X$, which we let the reader verify. Clearly $\|T\|\leq 1$. We show that $\|T^{-1}\|\leq 1$. Pick $f\in X$, which is a unique extension of $f':=f\restriction V$, and an edge $e\in E_{\Int^2}$, which is of the form $(v,v\pm e_i)$, for $i\in \{1,2\}$. Without loss of generality, let us assume it is of the form $(v,v+e_1)$. We shall moreover assume that $v$ is in the quadrant $Q_1:=\{(n,m)\in\Int^2\colon n,m\geq 0\}$, as the other cases are analogous, and write $v$ as $(v_1,v_2)$. If $v_2=0$, then $|f(v+e_1)-f(v)|=|f'(v+e_1)-f'(v)|$, since both $v$ and $v+e_1$ are in $V$, thus $|f(v+e_1)-f(v)|\leq\Lip(f')$. Otherwise, set $v':=(v_1,v_2-1)$, so $v'\in Q_1$, and we have \[\begin{split}|f(v+e_1)-f(v)|&=|f(v'+e_2+e_1)-f(v'+e_2)|\\ &=|(f(v'+e_1)+f(v'+e_2)-f(v'))-f(v'+e_2)|\\ &=|f(v'+e_1)-f(v')|.\end{split}\] If $v_2-1=0$, then we again get \[|f(v+e_1)-f(v)|=|f(v'+e_1)-f(v')|=|f'(v'+e_1)-f'(v')|\leq\Lip(f').\] Otherwise, we continue analogously and we obtain that \[\begin{split}|f(v+e_1)-f(v)|&=|f((v_1,0)+e_1)-f((v_1,0))|=|f'((v_1,0)+e_1)-f'((v_1,0))|\\ &\leq\Lip(f')\end{split}\] which completes the proof that $T$ is an isometry.

Finally, since $V$ is a tree, it is clear that $\Lip_0(V)\equiv\ell_\infty$, thus $X\equiv\ell_\infty$. Moreover, $\Int^2$ clearly acts faithfully on $X$. In fact, it is not difficult to verify that $\Int^2$ acts metrically properly on the predual $\FF_X(\Int^2)\equiv\ell_1$.
\end{example}

However, we do not know of any example of a translation-invariant pointwise closed subspace of $\Lip_0(\Int^d)$ that would be infinite-dimensional and separable. It turns out this is not possible by the following strong dichotomy.

\begin{theorem}\label{thm:mainresult}
Fix $d\in\Nat$ and let $X\subseteq\Lip_0(\Int^d)$ be an invariant pointwise closed subspace. Then the following dichotomy holds.
\begin{enumerate}
    \item Either $X$ is finite-dimensional;
    \item or $X$ is non-separable.
\end{enumerate}
In particular, we have the following.
\begin{enumerate}[label=(\arabic*')]
    \item Either $\FF_X(\Int^d)$ is finite-dimensional;
    \item or $\FF_X(\Int^d)$ is not reflexive.
\end{enumerate}
\end{theorem}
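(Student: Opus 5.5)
Throughout, fix an invariant pointwise closed $X\subseteq\Lip_0(\Int^d)$ and its presentation $\tilde X=(\Lip_0(\Int^d)\oplus\cons)\cap\bigcap_{i=1}^n\ker(P_{x_i})$, which is finite by \Cref{prop:fin-pres}. The plan is to pass from $X$ to a space of bounded functions via the discrete partial derivatives $\partial_j f(v):=f(v+e_j)-f(v)$. Since $\Int^d$ is abelian, convolution commutes with translations, so $\partial_j$ maps $\tilde X$ into
\[
Y:=\ell_\infty(\Int^d)\cap\bigcap_{i=1}^n\ker(P_{x_i}).
\]
The map $f\mapsto(\partial_1 f,\ldots,\partial_d f)$ is injective on $X$, and it is an isomorphism onto its image: $\Lip(f)=\max_j\|\partial_j f\|_\infty$ for the graph metric on the grid. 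So $X$ is isomorphic to a closed subspace of $Y^d$, and conversely $X$ controls $Y$ in the following sense. If $Y$ is finite-dimensional then so is $X$. This is because then $\partial_j f$ ranges in a finite-dimensional space $Y_X$ of bounded shift-invariant solutions, and $f$ is determined by $(\partial_j f)_j$ together with $f(0)=0$. Hence it suffices to show: \emph{if $X$ is infinite-dimensional, then $X$ is non-separable}. By the previous sentence this means the relevant subspace $Y_X:=\overline{\mathrm{span}}\{\partial_j f\colon f\in X,j\le d\}^{w^*}\subseteq Y$ is infinite-dimensional. I would then try to produce either an element of $X$ with $c_0$-gradient, and then invoke \Cref{thm:linftyembeds}, or an $\ell_\infty$ directly.

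The harmonic-analysis step is to study a weak$^*$-closed, translation-invariant subspace $W\subseteq\ell_\infty(\Int^d)$, namely $Y_X$ or an intersection of kernels of convolutions. Its annihilator in $\ell_1(\Int^d)$ is a closed ideal $I$, and the relevant object is the spectrum $Z(I)\subseteq\TT^d$, the common zero set of the Fourier transforms $\hat x_i$ of the presentation, intersected with the spectral data of $X$. Each $\hat x_i$ is a trigonometric polynomial, so $Z$ is a real-algebraic subset of $\TT^d$. By Wiener's Tauberian theorem, $Z=\emptyset$ forces $W=0$. If $Z$ is finite, then spectral synthesis fails only mildly for finite sets: every bounded solution is a finite combination of characters times polynomials, and boundedness kills the polynomials. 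So $W$ is finite-dimensional, and hence so is $X$. Therefore infinite-dimensionality of $X$ should force $Z$ to be infinite, and so, being semialgebraic, to contain a nontrivial real-analytic arc, indeed a set of positive dimension.

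The last step, and the main obstacle, is to turn an infinite spectrum into non-separability. I would first try the following. An infinite compact semialgebraic $Z$ carries a non-atomic measure $\mu$ supported on a smooth arc, chosen so that $\hat\mu$ decays, say via a smooth density on a curved piece; if the arc is a straight segment, a smooth density still gives decay along its direction. The function $\check\mu$ is then bounded and annihilated by $I$, and it lies in $c_0(\Int^d)$ by Riemann--Lebesgue. To get back into $X$, I need a function with prescribed $c_0$ derivatives. For this I would take $\mu$ supported away from the trivial character $1$ and set $f:=\check\nu$ with $\hat{}\,$-density $\nu=\mu/(\chi-1)$ in a suitable direction. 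This yields an element of $X$ with $c_0$-gradient, and \Cref{thm:linftyembeds} then gives $\ell_\infty\hookrightarrow X$. The delicate points are two. First, I must ensure that these synthesized functions actually lie in $X$ and not merely in $\tilde X$ defined by the presentation. This is fine because $X$ equals that presentation-defined space. Second, I must handle the case where $Z$ is infinite only through a component containing $1$, which is where the division by $\chi-1$ is most subtle. If decay cannot be arranged, the fallback is to embed $\ell_\infty$ directly: choose characters $\chi_k\in Z$ converging and combine the bounded functions $\chi_k$ with disjointly growing supports, as in the proof of \Cref{thm:linftyembeds}.
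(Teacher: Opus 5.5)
Your reduction matches the paper. $\partial_j$ maps $\tilde X$ into $X_B=\ell_\infty(\Int^d)\cap\bigcap_i\ker(P_{p_i})$, and if the common zero set $F\subseteq\TT^d$ of the $\Phi(p_i^*)$ is finite, then $X_B$, and hence $X$, is finite-dimensional. The paper makes your ``characters times polynomials'' heuristic rigorous via Shilov idempotents and the fact that points are sets of spectral synthesis for $\ell_1(\Int^d)$.

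The gap is in the infinite-spectrum half. Your main route is to produce an element of $X$ with $c_0$-gradient and then apply \Cref{thm:linftyembeds}. Such an element need not exist. In \Cref{ex:locked} one has $F=\TT\times\{1\}$ and $X\equiv\ell_\infty$ is non-separable. Yet every $f\in X$ is constant along $e_2$, so its $e_1$-increments are constant on each $e_2$-line, and a $c_0$-gradient forces $f=0$. The paper remarks on exactly this.

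Correspondingly, the step ``$\check\mu\in c_0(\Int^d)$ by Riemann--Lebesgue'' is false. A measure carried by an arc in $\TT^d$ with $d\ge 2$ is singular, so Riemann--Lebesgue does not apply. For a measure on $\TT\times\{1\}$, the function $\check\mu(x_1,x_2)$ does not depend on $x_2$ at all. Decay along the direction of the arc is not membership in $c_0$.

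The fallback does not repair this. Characters are unimodular everywhere, so there are no ``disjointly growing supports'' to combine. The construction in \Cref{thm:linftyembeds} needs functions vanishing on $B_{n-1}$ with gradient below $2^{-2n}$ off $B_n$, which is precisely the $c_0$-gradient property you lack.

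The missing idea is that you need only an uncountable norm-separated family, not a copy of $\ell_\infty$, and the characters themselves supply it.
\begin{enumerate}
\item Since $F$ is an infinite compact real-algebraic set, it is uncountable. You never use this.
\item For every $t\in F$, the character $\chi_t$ lies in $X_B\subseteq\tilde X^{\Com}$, because bounded functions on $\Int^d$ are Lipschitz. Moreover $\chi_f-\chi_t\in\Lip_0(\Int^d)^{\Com}$.
\item The paper fixes a coordinate $j$ and an uncountable $F''\subseteq F$ with $|1-f_j^{-1}|>\lambda$ for all $f\in F''$ and every ratio $f_jt_j^{-1}$ of infinite order.
\item Then $|\partial_j(\chi_f-\chi_t)(ne_j)|=|(f_jt_j^{-1})^n u-v|$, where $u=1-f_j^{-1}$ and $v=1-t_j^{-1}$. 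Density of $\{(f_jt_j^{-1})^n\}$ in $\TT$ gives $\Lip(\chi_f-\chi_t)\ge |u|+|v|>2\lambda$.
\end{enumerate}
This yields non-separability of $\tilde X^{\Com}$, hence of $X$, uniformly in all cases, including those with no $c_0$-gradient at all.

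You also did not address the primed statements. They follow because $\FF_X(\Int^d)$ is a separable predual of $X$: if it were reflexive, $X$ would be separable.
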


\begin{proof}
First, notice that $X$ is finitely presented by \Cref{prop:fin-pres}. Thus, there are $p_1,\ldots,p_n\in\Rea[\Int^d]$ such that \[\tilde X=\big(\Lip_0(\Int^d)\oplus\cons\big)\cap\bigcap_{i=1}^n \ker(P_{p_i}).\]

We shall consider two commutative Banach algebras. We assume a basic knowledge of commutative Banach algebras in the proof, in particular, the existence of the Gelfand transform and Gelfand duality for commutative $C^*$-algebras.

From now on, we shall work with complex scalars. However, we continue assuming that $p_1,\ldots,p_n\in \Rea[\Int^d]$, i.e. their coefficients are real. Moreover, we still assume that $\tilde X=\big(\Lip_0(\Int^d)\oplus\cons\big)\cap \bigcap_{i=1}^n \ker(P_{p_i})$ is a real space. By $\tilde X^\Com$, we denote its complex version which is the intersection of the complex $(\Lip_0(\Int^d)\oplus\cons)^\Com$ with $\bigcap_{i=1}^n \ker(P_{p_i}^\Com)$. Since $p_1,\ldots,p_n$ are real, $\tilde X^\Com$ is in fact the complexification of $\tilde X$. In particular, $\tilde X\subseteq \tilde X^\Com$ both as sets and as real Banach spaces.

First, we consider the commutative $C^*$-algebra $C(\TT^d)$, equipped with pointwise addition, multiplication and the ${}^*$-operation, which is the pointwise complex conjugation. We remark that this algebra is, by the Gelfand transform, ${}^*$-isomorphic to the group $C^*$-algebra $C^*(\Int^d)$. 
Second, we consider the Banach ${}^*$-algebra $\ell_1(\Int^d)$, which is, besides the addition, equipped with
\begin{enumerate}
    \item multiplication in the form of convolution defined for $\phi,\psi\in\ell_1(\Int^d)$ by \[\phi\ast\psi(x):=\sum_{y\in \Int^d} \phi(x-y)\cdot\psi(y),\quad x\in\Int^d;\]
    \item the ${}^*$-operation, which is an involution defined for any $\phi\in\ell_1(\Int^d)$ by \[\phi^*(x):=\overline{\phi(-x)},\quad x\in\Int^d.\]
\end{enumerate}
The Gelfand space of $\ell_1(\Int^d)$, the space of all characters of $\ell_1(\Int^d)$ with the weak${}^*$-topology, is canonically homeomorphic to $\TT^d$. The Gelfand transform $\Phi:\ell_1(\Int^d)\to C(\TT^d)$ is defined \[\Phi(\delta_x)(t):=\prod_{i=1}^d t_i^{x_i},\quad \forall x=(x_i)_{i=1}^d\in\Int^d,\; t=(t_i)_{i=1}^d\in\TT^d,\] and extended linearly. It preserves the multiplication and the ${}^*$-operation.

For each $i\leq n$, set $\bm{p}_i:=\Phi(p_i^*)\in C(\TT^d)$ and let $\JJ\subseteq C(\TT^d)$ be the closed ideal generated by $\{\bm{p}_i\colon i\leq n\}$. Consider the quotient algebra $\B:=C(\TT^d)/\JJ$. By the Gelfand duality, $\B$ is isomorphic to $C(F)$, where $F\subseteq\TT^d$ is a closed subset. It is standard, since $\B=C(\TT^d)/\langle \bm{p}_i\colon i\leq n\rangle$, that we actually have \[F=\{x\in\TT^d\colon \forall i\leq n\;(\bm{p}_i(x)=0)\}.\]

We also let $\II\subseteq\ell_1(\Int^d)$ be the closed ideal generated by $\{p_i^*\colon i\leq n\}$ and denote by $\AAA$ the quotient algebra $\ell_1(\Int^d)/\II$. Notice that the Gelfand spectrum of $\AAA$ is $F$. Indeed, by definition, it consists of all the characters of $\ell_1(\Int^d)$ that vanish on $\II$. Since $\II$ is generated by $\{p_i^*\colon i\leq n\}$, it consists of all the characters of $\ell_1(\Int^d)$ that vanish on $\{p_i^*\colon i\leq n\}$. Then given a character $\chi$ represented by $f=(f_1,\ldots,f_d)\in\TT^d$ and $p_i^*$, for $i\leq n$, where $p_i^*=\sum_{j=1}^k \alpha_j v_j$, where $(\alpha_j)_{j=1}^k\subseteq\Rea$ and $(v_j)_{j=1}^k\subseteq\Int^d$, where further $v_j=(v_j^1,\ldots,v_j^d)\in\Int^d$ we have \[\chi(p_i^*)=\sum_{j=1}^k \alpha_j\big(\prod_{l=1}^d f_l^{v_j^l}\big)=\bm{p}_i(f),\] showing that characters vanishing on all $\{p_i^*\colon i\leq n\}$ coincide with elements of $\TT^d$ that annihilate all of $\{\bm{p}_i\colon i\leq n\}$, which is, by definition, the set $F$.

Next we set 
\[X_B:=\{f\in \ell_\infty(\Int^d)\colon \forall i\leq n\;(f\ast p_i=0)\}\subseteq \ell_\infty(\Int^d),\] which is a pointwise closed subspace of $\ell_\infty(\Int^d)$, and $X_B\subseteq \tilde X^\Com$.

We claim that there is a canonical identification of $\AAA^*$ and $X_B$. Indeed, elements of $\AAA^*$ are elements of $\ell_1(\Int^d)^*$ that annihilate $\II$, thus \[\begin{split}\AAA^* &=\{f\in\ell_1(\Int^d)^*\colon \forall z\in\II\; (f(z)=0)\}\\ &=\{f\in\ell_\infty(\Int^d)\colon \forall i\leq n\;(f\ast p_i=0)\}=X_B.\end{split}\]\medskip

\noindent\textbf{Claim 1.} If $\B$ is finite-dimensional, then $X$ and $\FF_X(\Int^d)$ are finite-dimensional as well.\smallskip

\noindent\textit{Proof of Claim 1.} First, we show that $X_B$ is finite-dimensional, for which it is therefore enough to show that the algebra $\AAA$ is finite-dimensional.

Since $\B=C(F)$ is finite-dimensional, it follows that $F$ is finite. Let us argue that $\AAA$ having finite Gelfand spectrum implies that $\AAA$ is finite-dimensional, in fact, $|F|$-dimensional (we emphasize this is not true for general commutative Banach algebras). 

Since $F$ is finite, each element of $F$ is relatively clopen in $F$. By the Shilov idempotent theorem, see \cite[Chapter II - Theorem 5]{BoDu-book}, for each $f\in F$ there is an idempotent $e_f\in \AAA$ whose Gelfand transform is the characteristic function of the singleton $\{f\}$. Notice that \[e_fe_{f'}=0,\quad \forall f\neq f'\in F,\quad \text{and }\sum_{f\in F} e_f=1\in\AAA.\] For the former, the Gelfand transform of $e_fe_{f'}$ is clearly zero and the Gelfand transform is injective on idempotents (notice that a product of idempotents in a commutative algebra is an idempotent). Similar argument gives the latter.  For $f\in F$, set $\AAA_f:=\AAA\cdot e_f$. Since $\AAA$ is commutative and $e_f$ is an idempotent, each $\AAA\cdot e_f$ is a subalgebra, which is closed as a range of the bounded projection $a\to a\cdot e_f$. It follows that $\AAA$ is a finite topological sum of the subalgebras $\AAA_f$, $f\in F$. Therefore, it suffices to check that each $\AAA_f$ is finite-dimensional.

Fix $f\in F$ and notice that the Gelfand spectrum of $\AAA_f$ is $\{f\}$. Indeed, since the projection $a\to a\cdot e_f$ is a bounded algebra morphism, each character $\chi$ of $\AAA_f$ can be, by precomposing with this projection, extended to a character of $\AAA$. Thus, characters of $\AAA_f$ are restrictions of characters of $\AAA$ and since the Gelfand transform of $e_f$ is the characteristic function of $\{f\}$, clearly, any character associated to $f'\neq f\in F$ vanishes on $\AAA_f$.

Thus, since $\AAA$ is a quotient of $\ell_1(\Int^d)$ and $\AAA_f$ is a quotient of $\AAA$, we get that $\AAA_f$ is a quotient of $\ell_1(\Int^d)$ with a spectrum consisting of a single point.

By \cite[Theorem 2.4.16]{ReiSte-book} applied to the image $\Phi[\ell_1(\Int^d)]$ of $\ell_1(\Int^d)$ in $C(\TT^d)$ by the Gelfand transform, we get that $\{f\}$, for each $f\in F$, is a set of spectral synthesis for $\Phi[\ell_1(\Int^d)]$, thus by the definition, \cite[Definition 2.4.14]{ReiSte-book}, $\AAA_f=\ell_1(\Int^d)/M_f$, where $M_f\subseteq\ell_1(\Int^d)$ is the maximal ideal such that the spectrum of $\ell_1(\Int^d)/M_f$ is $\{f\}$, which is the kernel of the character associated to $f$, thus $\AAA_f$ is $1$-dimensional.

Now, denote by $X^\Com$ the first summand of $\tilde X^\Com=(X\oplus\cons)^\Com$. Given any $f\in X^\Com$ and $i\leq d$ we denote by $\partial_i f$ the $i$-th partial discrete derivative, that is \[\partial_i f(x)=f(x)-f(x-e_i),\quad x\in\Int^d,\] where $e_i$ is the $i$-th canonical basis element of $\Int^d$.

We claim that $\partial_i f\in X_B$ for all $f\in X$ and $i\leq d$. Fix $f\in X^\Com$ and $i\leq d$. First, notice that $\partial_i f\in\ell_\infty(\Int^d)$. This follows since \[\|\partial_i f\|_{\ell_\infty}=\sup_{x\in\Int^d} |\partial_i f(x)|=\sup_{x\in\Int^d} |f(x)-f(x-e_i)|\leq \Lip(f)<\infty.\] Second, we verify that $\partial_i f\ast p_j=0$, for every $j\leq n$. However, since $\Int^d$ is abelian and so the convolution $\ast$ is commutative we get for every $j\leq n$ and $x\in\Int^d$ \[\partial_i f\ast p_j(x)=f\ast p_j(x)-f\ast p_j(x-e_i)=0-0=0,\] proving that $\partial_i f\in X_B$. Consider the map \[f\in X^\Com\to (\partial_1 f,\ldots,\partial_d f)\in \bigoplus_{i\leq d} X_B.\] The range is a finite sum of finite-dimensional spaces, so it is finite-dimensional, and the map is linear and injective. Indeed, linearity is obvious and injectivity follows from the fact that if $0\neq f\in X$ then there are $x,y\in \Int^d$ whose distance is $1$ in the canonical word metric on $\Int^d$, which is the `$\ell_1$-metric on $\Int^d$', such that $f(x)-f(y)\neq 0$, so for some $i\leq d$, $\partial_i f\neq 0$. It follows that $X^\Com$ is finite-dimensional itself, which in turn implies that $X$ is finite-dimensional. This finishes the proof of Claim 1.\medskip

\noindent\textbf{Claim 2.} If $\B$ is infinite-dimensional, then $X$ is non-separable.\smallskip

\noindent\textit{Proof of Claim 2.}

Since $\B=C(F)$ is infinite-dimensional, the compact subset $F\subseteq\TT^d$ is infinite. Identifying $\Com^d$ with $\Rea^{2d}$, we can view the zero set of each $\bm{p}_i$, $i\leq n$, as a real algebraic set. The same is true for $\TT^d$ seen as a subset of $\Rea^{2d}$, therefore $F$ is a compact infinite real algebraic set, thus in particular, an infinite semialgebraic set, so it must be uncountable, see \cite[Theorem 2.3.6]{BoCoRo-book} (see also \cite{BoCoRo-book} for the notions of algebraic and semialgebraic sets).

Since $F$ is uncountable, there is at least one coordinate $1\leq j\leq d$ such that the set $\{f_j\colon (f_i)_{i=1}^d\in F\}$ of the $j$-th coordinates of elements of $F$ is uncountable. Moreover, there exists $\lambda>0$ such that the set $\{f_j\colon (f_i)_{i=1}^d\in F, |1-f_j^{-1}|>\lambda\}$ is uncountable. Therefore, we can select an uncountable subset $F'\subseteq F$ whose $j$-th coordinates are all pairwise different and satisfy $|1-f_j^{-1}|>\lambda$, for every $(f_i)_{i=1}^d\in F'$. Finally, we can take a further, still uncountable, subset $F''\subseteq F'$ such that for every $f\neq t\in F''$, $f_jt_j^{-1}$ has infinite order, since for each $f\in F'$ the set $\{t\in F'\colon f_jt_j^{-1}\text{ has finite order}\}$ is at most countable.

Each $f\in F''$ corresponds to the character $\chi_f:\AAA\to\Com$ defined by \[\chi_f\big((x_i)_{i=1}^d\big):=\prod_{i=1}^d f_i^{x_i}.\] In particular, we have $\chi_f\in \AAA^*=X_B\subseteq \tilde X^\Com$. We shall show that there is a constant $K>0$ such that \[\Lip(\chi_f-\chi_t)>K,\quad f\neq t\in F'',\] thereby showing that $\tilde X^\Com\subseteq\big(\Lip_0(\Int^d)\oplus\cons\big)^\Com$, equipped with the $\big(\Lip\oplus_{\ell_\infty} |\cdot|\big)$-norm, is non-separable. Therefore, also $X$ with the Lipschitz norm is non-separable.

Fix $f\neq t\in F''$. Set $u:=(1-f_j^{-1})$ and $v:=(1-t_j^{-1})$. Since $\chi_f(0)=\chi_t(0)=1$, we have $\chi_f-\chi_t\in\tilde X^\Com\cap \Lip_0(\Int^d)^\Com$. We apply the partial derivative operator $\partial_j$. Let us estimate \begin{equation}\label{eq:Case2-1}
    \sup_{x\in\Int^d} |\partial_j \chi_f(x)-\partial_j \chi_t(x)|\leq \Lip(\chi_f-\chi_t)
\end{equation}
from below. For every $n\in\Int$, we denote by $x_n\in\Int^d$ the element whose $j$-th coordinate is equal to $n$ and all other coordinates are $0$. We have 
\begin{equation}\label{eq:Case2-2}
\begin{split}
|\partial_j \chi_f(x_n)-\partial_j \chi_t(x_n)| &=|\chi_f(x_n)-\chi_f(x_n-e_j)-(\chi_t(x_n)-\chi_t(x_n-e_j))|\\ & =|f_j^n-f_j^{n-1}-(t_j^n-t_j^{n-1})|=|f_j^n(1-f_j^{-1})-t_j^n(1-t_j^{-1})|\\ &=|(f_jt_j^{-1})^n u-v|.  
\end{split}
\end{equation}
 Since, by the assumption, $f_jt_j^{-1}$ has infinite order, $\{(f_jt_j^{-1})^n\colon n\in\Int\}$ is dense in $\TT$. Thus, since $\min\{|u|,|v|\}>\lambda$ we get, using \eqref{eq:Case2-1} and \eqref{eq:Case2-2}, \[\begin{split}\Lip(\chi_f-\chi_t) &\geq \sup_{x\in\Int^d} |\partial_j \chi_f(x)-\partial_j \chi_t(x)|\geq \sup_{n\in\Int} |\partial_j \chi_f(x_n)-\partial_j \chi_t(x_n)|\\ & =\sup_{n\in\Int} |(f_jt_j^{-1})^n u-v|>2\lambda,\end{split}\] so we are done with the constant $K=2\lambda$. This finishes the proof.
\end{proof}

\section*{Acknowledgements}
The author is grateful to Ebrahim Samei for a discussion about commutative Banach algebras in the proof of \Cref{thm:mainresult}; especially, for introducing the author to the sets of spectral synthesis and their use in the argument.

The author also thanks Christopher Gartland for a fruitful discussion about Lipschitz harmonic functions that motivated this research.

\noindent An \textbf{AI assistant}, namely ChatGPT Plus, was used in the final stage of the preparation of the manuscript for proofreading and for a help and reference with the basic algebraic geometry fact in Claim 2 in the proof of \Cref{thm:mainresult}.
\bibliographystyle{siam}

\bibliography{references}
\end{document}